\documentclass{amsart}

\usepackage{amssymb,amsthm,amsmath,amstext,amsxtra}
\usepackage{booktabs}
\usepackage{hyperref}
\hypersetup{colorlinks=true,urlcolor=blue,citecolor=blue,linkcolor=blue}
\usepackage{enumerate}
\usepackage{enumitem}
\usepackage{comment}
\usepackage{colonequals}
\usepackage{float} 
\usepackage{subcaption}

\usepackage[linesnumbered,lined]{algorithm2e}

\usepackage[numbers]{natbib}

\defcitealias{sage}{\scshape SageMath}

\numberwithin{equation}{section}

\theoremstyle{plain}
\newtheorem{theorem}[equation]{Theorem}

\newtheorem{proposition}[equation]{Proposition}
\newtheorem{lemma}[equation]{Lemma}
\newtheorem{corollary}[equation]{Corollary}

\theoremstyle{definition}
\newtheorem{definition}[equation]{Definition}

\theoremstyle{remark}
\newtheorem{remark}[equation]{Remark}


\newcommand{\Z}{\mathbb{Z}}
\newcommand{\Q}{\mathbb{Q}}

\newcommand{\F}{\mathbb{F}}


\newcommand{\PP}{\mathbb{P}}

\newcommand{\calC}{\mathcal{C}}
\newcommand{\tildecalC}{\widetilde{\calC}}

\newcommand{\Otilde}{\widetilde{O}}


\DeclareMathOperator{\Frob}{Frob}


\newcommand{\id}{\mathrm{id}}

\newcommand{\dd}{\mathrm{d}}
\newcommand{\MW}{{\textrm{MW}}}

\newcommand{\psmod}[1]{~(\textup{\text{mod}}~{#1})}

\usepackage{xcolor}
\definecolor{darkred}{HTML}{CC1F1F}
\definecolor{green}{rgb}{.4,.7,.4}
\definecolor{blue}{rgb}{.2,.6,.75}
\definecolor{pastelb}{HTML}{3333FF}
\definecolor{melon}{HTML}{F89EDD}

\definecolor{pastelyellow}{rgb}{0.992157, 0.552941, 0.235294}
\definecolor{pastelorange}{rgb}{0.941176, 0.231373, 0.12549}
\definecolor{pastelred}{rgb}{0.741176, 0., 0.14902}
\definecolor{darkbrown}{rgb}{0.25098, 0., 0.0745098}

\setcounter{tocdepth}{1}

\newcommand\blfootnote[1]{%
	\begingroup
	\renewcommand\thefootnote{}\footnote{#1}%
	\addtocounter{footnote}{-1}%
	\endgroup
}

\hypersetup{colorlinks=true,linkcolor=blue,anchorcolor=blue,citecolor=blue}
\hypersetup{pdftitle={Computing Zeta Functions of Cyclic covers in Large Characteristic},
 pdfauthor={Vishal Arul, Edgar Costa, Richard Magner, and Nicholas Triantafillou}}
\author[V. Arul]{Vishal Arul}
\address{Department of Mathematics, Massachusetts Institute of Technology, Cambridge, MA 02139, USA}
\email{varul@mit.edu}
\urladdr{\url{http://math.mit.edu/~varul/}}

\author[A. Best]{Alex J. Best}
\address{Department of Mathematics and Statistics, Boston University, Boston, MA 02215, USA}
\email{alex.j.best@gmail.com}
\urladdr{\url{https://alexjbest.github.io/}}

\author[E. Costa]{Edgar Costa}
\address{Department of Mathematics, Massachusetts Institute of Technology, Cambridge, MA 02139, USA}
\email{edgarc@mit.edu}
\urladdr{\url{https://edgarcosta.org}}

\author[R. Magner]{Richard Magner}
\address{Department of Mathematics and Statistics, Boston University, Boston, MA 02215, USA}
\email{rmagner@bu.edu}
\urladdr{\url{http://math.bu.edu/people/rmagner/}}

\author[N. Triantafillou]{Nicholas Triantafillou}
\address{Department of Mathematics, Massachusetts Institute of Technology, Cambridge, MA 02139, USA}
\email{ngtriant@mit.edu}
\urladdr{\url{https://math.mit.edu/~ngtriant/}}

\thanks{
The authors are grateful to the organizers of Sage Days 87, where this project began.
We would also like to thank the reviewers for their many helpful comments.
The second author was supported by the Simons Collaboration Grant \#550023.
The third author was partially supported by the Simons Collaboration Grant \#550029.
The fifth author was supported by the National Science Foundation Graduate Research Fellowship under Grant \#1122374.
}

\begin{document}

\title[Zeta Functions of Cyclic covers in Large Characteristic]{Computing Zeta Functions of Cyclic covers in Large Characteristic}

\begin{abstract}
  We describe an algorithm to compute the zeta function of a cyclic cover of the projective line over a finite field of characteristic $p$ that runs in time $p^{1/2 + o(1)}$.
  We confirm its practicality and effectiveness by reporting on the performance of our \textsc{SageMath} implementation on a range of examples.
  The algorithm relies on Gon\c{c}alves's generalization of Kedlaya's algorithm for cyclic covers, and Harvey's work on Kedlaya's algorithm for large characteristic.
\end{abstract}

\maketitle

\blfootnote{2010 {\normalfont\itshape
		Mathematics Subject Classification}: 11G20 (primary)
	11Y16, 11M38, 14G10 (secondary)}

\section{Introduction}
\label{sec:introduction.tex}
For $\calC$ an algebraic curve of genus $g$ over a finite field $\F_q$ of characteristic $p$ and cardinality $q = p^n$, the zeta function  of $\calC$ is defined by
\begin{equation*}
  Z(\calC , t) \colonequals \operatorname{exp}\left( \sum_{i = 1} ^\infty \#
	\calC( \F_{q^i} ) \frac{t^i}{i} \right)  = \frac{L( \calC, t)}{(1 -t) (1 - q t)},
\end{equation*}
where $L(\calC, t) \in 1 + t \Z[t]$ is a degree $2g$ polynomial, with reciprocal
roots of complex absolute value $q^{1/2}$, and satisfies the functional equation
$L( \calC, t) = q^g t^{2 g} L( \calC, 1/(t q))$.
In this paper, we address how to effectively compute $Z(\calC, t)$ for a cyclic
cover of $\PP^1$ defined by $y^r = \overline{F}(x)$, where $\overline{F}(x)$ is
squarefree and $p$ is large in comparison to $g$, without any restrictions on
$r$ and $\deg \overline{F}$ sharing a common factor.

For curves of small genus, Schoof's method and its
variants \cite{schoof-85, pila-90,
gaudry-schost-04, gaudry-kohel-smith-11, gaudry-schost-12} can compute $Z(\calC , t)$ in time and
space polynomial in $\log q$ and exponential in the genus.  However, the
practicality of these methods has only been shown for genus at most 2.  These
are known as $\ell$-adic methods, as their efficiency derives from the
realization of the $\ell$-adic cohomology of the variety via torsion points.

Alternatively, Kedlaya \cite{kedlaya-01} showed that $Z (\calC, t)$ can be
determined in quasi-linear time in $p$ for an odd hyperelliptic curve, i.e., $r
= 2$ and $\deg \overline{F} = 2 g + 1$, by computing an approximation of the
Frobenius matrix acting on $p$-adic cohomology (Monsky--Washnitzer cohomology).
Kedlaya's algorithm and its variants are known as $p$-adic methods.  In \cite{harvey-07}, Harvey improved the
time dependence in $p$ to $p^{1/2 + o(1)}$. In \cite{harvey-14}, this
improvement plays a major role in Harvey's algorithm for computing the $p$-local
zeta functions of an odd hyperelliptic curve over $\Z$ for all $p$ up to some
bound.  Kedlaya's original algorithm has been subsequently generalized several
times, for example to superelliptic curves \cite{gaudry-gurel-01},
$C_{a,b}$ curves \cite{denef-vercauteren-06}, even degree hyperelliptic curves
\cite{harrison-12}, and nondegenerate curves \cite{castryck-denef-vercauteren-06}.
More recently,
Gon\c{c}alves \cite{goncalves-15} extended Kedlaya's algorithm to cyclic covers
of $\PP^1$ and Tuitman \cite{tuitman-16, tuitman-17} to general covers. All
these generalizations kept the quasi-linear time dependence in $p$.  Minzlaff \cite{minzlaff-10} improved
Gaudry--G\"{u}rel's algorithm for superelliptic curves by incorporating Harvey's work, giving a $p^{1/2 + o(1)}$ time algorithm.
The algorithms described
above are efficient in practice, and have been integrated into the current
versions of \textsc{Magma} \cite{magma} and \textsc{SageMath} \cite{sage}.

In this paper, we build upon Gon\c{c}alves, Harvey, and Minzlaff's work to
obtain a practical $p^{1/2 + o(1)}$ algorithm for cyclic covers of $\PP^1$.
Theoretically, we already knew of the existence of algorithms with such a time dependence on \(p\) (and their
average polynomial time versions) for arbitrary schemes (see \cite{harvey-15}).
These algorithms for arbitrary schemes have never been implemented, and it is unclear if they can be made to work in practice.
Our algorithm improves the dependence on other parameters over these very general algorithms and provides a step towards a practical average
polynomial time in higher genus, analogous to the progression from $p^{1/2 +
o(1)}$ to average polynomial time for odd hyperelliptic curves by Harvey.

More recently, Tuitman \cite{tuitman-18} combined Harvey's ideas with a deformation approach to give a $p^{1/2 + o(1)}$ algorithm for computing zeta functions of generic projective hypersurfaces of higher dimension.
Tuitman's algorithm has a similar theoretical dependence on the degree of the curve and the degree of the field (over $\mathbb F_{p}$) as our algorithm.

Throughout we will use a bit complexity model for computation and the notation \(\Otilde(x) = \bigcup_k O(x \log^k(x))\).
Our main result is then as follows:
\begin{theorem} \label{thm:mainthm}
  Let $\calC$ be a cyclic cover of $\PP^1$, of genus $g$, defined by
  \begin{equation*}
    \calC: y^r = \overline{F}(x),
  \end{equation*}
  where $\overline{F} \in \F_q[x]$ is a
  squarefree polynomial of degree $d$.
  Let $\tildecalC$ be the curve obtained from $\calC$ by removing the $\delta$
  points at infinity and the $d$ points on the $x$-axis corresponding to the
  zeros of $\overline{F}(x)$.
  Let $M_{\epsilon}$ be the matrix
  of Frobenius acting on $B_{\epsilon}$, where $B_{\epsilon}$ is a basis of the
  Monsky--Washnitzer cohomology of $\widetilde{C}$ defined in
  \eqref{def:Beps}.

  Let $N \geq 1$, and assume
\begin{equation}
\label{eqn:assumption}
p > d (N + \epsilon) r \text{ and } r + d \geq 5.
\end{equation}
Then the entries of $M$ are in $\Z_q$ and we may compute $M$ modulo $p^N$ in time
\begin{equation*}
  \Otilde( p^{1/2} N^{5/2} d^\omega r n +  N^4 r d^4 n \log p + N n^2 \log p)
\end{equation*}
	and space 
\begin{equation*}
    O( (p^{1/2} N^{3/2} + r N^2) d^2 n \log p),
  \end{equation*}
  where $\omega$ is a real number such that the matrix arithmetic operations on
	matrices of size $m \times m$ take $\Otilde(m^{\omega})$ ring operations.
\end{theorem}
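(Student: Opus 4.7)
The plan is to combine Gon\c{c}alves's \cite{goncalves-15} generalization of Kedlaya's algorithm to cyclic covers with Harvey's \cite{harvey-07} baby-step giant-step technique for large characteristic, following the template set by Minzlaff \cite{minzlaff-10} in the superelliptic case.

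First I would lift $\overline{F}$ to a polynomial $F \in \Z_q[x]$ and work in the weak completion of $\Z_q[x, y, y^{-1}, F(x)^{-1}]/(y^r - F(x))$. Gon\c{c}alves's construction supplies the basis $B_\epsilon$ of the Monsky--Washnitzer cohomology of $\tildecalC$, of size $O(dr)$, together with an explicit lift of Frobenius $\sigma$ given on generators by $\sigma(x) = x^p$ and $\sigma(y) = y^p \bigl(F(x^p)/F(x)^p\bigr)^{1/r}$, expanded as a binomial $p$-adic series. The task is then to express each $\sigma(\omega)$, for $\omega \in B_\epsilon$, as a $\Z_q$-linear combination of $B_\epsilon$ modulo $p^N$. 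Written out naively, $\sigma(\omega)$ has numerator of length $\Theta(pN)$, and the direct cohomological reduction costs $\Otilde(p)$ operations per basis element, which is essentially the algorithm of \cite{goncalves-15}.

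The main new ingredient is to carry out these reductions via Harvey's technique. Gon\c{c}alves's reduction formulas eliminate high powers of $y$ and of the local uniformizers $x - a_i$ at each pole by linear recurrences whose coefficients are rational functions of the exponent; the hypothesis $p > d(N + \epsilon)r$ from \eqref{eqn:assumption} ensures that all denominators appearing stay $p$-adic units across the reduction range. Consequently, the cumulative effect of reducing $\sigma(\omega)$ can be encoded as a matrix product $M(1) M(2) \cdots M(K)$ with $K = \Theta(pN)$, where each $M(t)$ is an $O(d) \times O(d)$ matrix over $\Z_q$ whose entries are polynomials in $t$ of $O(1)$ degree. Harvey's baby-step giant-step procedure evaluates such a product using $\Otilde(\sqrt{K})$ multiplications of $O(d) \times O(d)$ matrices over $\Z_q/p^N$, plus a one-time polynomial evaluation.

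The main obstacle will be organizing Gon\c{c}alves's local reductions --- one at each of the $d$ roots of $F$ and one at infinity --- uniformly into a single polynomial-matrix recursion with simultaneous control on matrix sizes, polynomial degrees, and precision loss, while properly tracking the extra factor of $N$ arising from the $p$-adic expansion of $\sigma(y)$. Once this is achieved, the standard Harvey bookkeeping, summed across the $O(dr)$ basis elements, produces the $\Otilde(p^{1/2} N^{5/2} d^\omega r n)$ main term; precomputing the entries of the $M(t)$'s and the Frobenius lift to full precision yields the $\Otilde(N^4 r d^4 n \log p)$ contribution; and the $\Z_q$-arithmetic required to descend to the Frobenius on a $\Z_p$-basis of $\Z_q$ contributes the $\Otilde(N n^2 \log p)$ term. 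The space bound follows by storing only the running matrix product at precision $N$ together with the $O(\sqrt K)$ pieces of interpolation data needed in the baby-step giant-step.
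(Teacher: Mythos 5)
Your high-level plan is the paper's plan (Gon\c{c}alves $+$ Harvey/Minzlaff), but two of your load-bearing claims are wrong or missing, and they are exactly where the actual work lies. First, the hypothesis $p > d(N+\epsilon)r$ does \emph{not} make the denominators in the reduction recurrences $p$-adic units. In the horizontal reduction the denominator $D_H^t(s) = (d(t-r)-rs)f_d$ is $\equiv 0 \pmod p$ whenever $s \equiv -d \pmod p$, which happens once in every batch of $p$ reduction steps, and similarly $D_V^j(t)$ is divisible by $p$ once per $p$ vertical steps. The hypothesis only guarantees these denominators are never divisible by $p^2$. The paper therefore has to prove integrality/divisibility statements (that the corresponding numerator matrices are $\equiv 0 \pmod p$ at those steps, via a local-expansion integrality lemma at the roots of $F$) and run the whole reduction with one guard digit at precision $p^{N+1}$, tracking ``1-correct'' vectors \`a la Harvey so that the single division by $p$ per batch does not accumulate into $\Theta(N)$ lost digits. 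If the denominators really were units, Harvey's method would transfer with no new ideas; they are not, and your proposal skips the argument that controls the resulting precision loss.

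Second, you never engage with why the basis $B_\epsilon$ carries the parameter $\epsilon$. When $\delta = \gcd(r,d) > 1$, the horizontal denominator $d(t-r)-rs$ can be \emph{identically} zero for differentials arising from the naive basis ($\epsilon = 0$), so the reduction recurrence cannot even be set up; the shift to $\epsilon = 1$ (extra pole order $r$ in $y$) is precisely what forces $d(t-r)-rs > 0$ throughout the reduction range, and an extra argument is then needed to split off the $(\delta-1)$-dimensional Frobenius-stable complement $\ker(\eta)$ when recovering the zeta function. This is the main point of difference from Minzlaff's superelliptic case and the reason the theorem is not a citation. A smaller issue: your description of the reduction as local recurrences ``at each of the $d$ roots of $F$ and at infinity'' does not match the structure actually used (a two-stage horizontal reduction in $x$-degree on $d$-dimensional spaces $W_{s,t}$ followed by a vertical reduction in $y$-pole order on $(d-1)$-dimensional spaces), and it is not clear that a root-by-root organization yields transition matrices whose entries are linear in the index, which is what the baby-step giant-step theorem requires.
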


With the goal of computing $Z(\calC, t)$ we may apply Theorem \ref{thm:mainthm}
with $N = O(n r d)$, for example as in \eqref{eqn:N}, and this gives the following result:
\begin{theorem} \label{thm:fast}
	In the same setup as Theorem \ref{thm:mainthm},  assume $p > d r (\frac{1}{2}
	g n  + \log_{p}(g) + 2)$.
	We can compute the numerator of the zeta function of $\mathcal{C}$ in time
  \begin{equation*}
  \Otilde( p^{1/2} n^{7/2} r^{7 / 2} d^{5/2 + \omega} +  n^5 r^5 d^8 \log p )
  \end{equation*}
	and space
  $
  \displaystyle
    O((p^{1/2} + n^{1/2} r^{3/2} d^{1/2}) n^{5/2} r^{3/2} d^{7/2} \log p).
    $
\end{theorem}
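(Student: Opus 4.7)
The plan is to apply Theorem \ref{thm:mainthm} at a $p$-adic precision $N$ just large enough that the integer coefficients of $L(\calC, t)$ are uniquely determined modulo $p^N$, and then pass from the computed Frobenius matrix $M$ to $L(\calC, t)$ via the standard norm-and-characteristic-polynomial recipe. Substituting the resulting value of $N$ into the complexity bounds of Theorem \ref{thm:mainthm} will then yield the advertised estimates.

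First, I choose $N$. By the Weil conjectures, the coefficients of $L(\calC, t) \in \Z[t]$ are bounded in absolute value by $\binom{2g}{g} q^{g/2}$, so taking
\[
N \;=\; \left\lceil \tfrac{1}{2} g n + \log_p(g) + 2 \right\rceil
\]
suffices to recover $L(\calC, t)$ from its reduction modulo $p^N$. Riemann--Hurwitz for a cyclic cover $y^r = \overline{F}(x)$ with squarefree $\overline{F}$ of degree $d$ gives $g \leq (r-1)(d-1)/2 = O(rd)$, so $N = O(nrd)$. The key compatibility check is that the hypothesis $p > dr\bigl(\tfrac{1}{2} g n + \log_p(g) + 2\bigr)$ of Theorem \ref{thm:fast} is precisely what is needed to ensure $p > dr(N+\epsilon)$, i.e.\ that assumption \eqref{eqn:assumption} of Theorem \ref{thm:mainthm} holds at this precision.

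Next, I invoke Theorem \ref{thm:mainthm} to obtain $M$ modulo $p^N$. Direct substitution of $N = \Theta(nrd)$ into the three additive terms of its time bound collapses them to the two terms advertised in Theorem \ref{thm:fast}; the $N n^2 \log p$ term becomes $O(n^3 r d \log p)$ and is absorbed, while the space bound simplifies analogously to $(p^{1/2} + n^{1/2} r^{3/2} d^{1/2}) n^{5/2} r^{3/2} d^{7/2} \log p$ up to constants. Finally, I recover $L(\calC, t)$ from $M$: since $p$-power Frobenius acts $\sigma$-semilinearly on the Monsky--Washnitzer cohomology, the geometric $\F_q$-Frobenius is represented by the norm $M\,\sigma(M)\cdots\sigma^{n-1}(M)$, whose characteristic polynomial, after dividing out the explicit contributions of the $\delta + d$ points removed to form $\widetilde{\calC}$, is the reversal of $L(\calC, t)$. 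Uniqueness of the integer lift is guaranteed by our choice of $N$, and this post-processing --- $O(n)$ semilinear matrix products and one characteristic polynomial of a matrix of size $O(rd)$ over $\Z_q/p^N$ --- fits comfortably inside the main bound.

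The main obstacle is purely arithmetic bookkeeping: verifying that the clean hypothesis of Theorem \ref{thm:fast} really implies \eqref{eqn:assumption} for our chosen $N$, that this $N$ is genuinely large enough for the Weil-bound recovery, and that each post-processing step lies below the complexity of computing $M$. All of the analytic and cohomological content has already been absorbed into Theorem \ref{thm:mainthm}.
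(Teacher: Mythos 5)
Your overall route is the paper's: choose a precision $N$ so that $L(\calC,t)$ is determined by $M$ modulo $p^N$, apply Theorem~\ref{thm:mainthm}, and recover $L(\calC,t)$ from the norm $M\,\sigma(M)\cdots\sigma^{n-1}(M)$ after stripping off the explicit Frobenius action on $\ker(\eta)$. The genuine gap is in your justification of $N$. You claim that the Weil bound $|a_i|\le\binom{2g}{g}q^{g/2}$ on the coefficients of $L$ makes $N=\lceil\tfrac12 gn+\log_p(g)+2\rceil$ sufficient. But determining an integer of absolute value up to $\binom{2g}{g}q^{g/2}$ from its residue modulo $p^N$ requires $p^N>2\binom{2g}{g}q^{g/2}$, i.e.\ $N>\tfrac12 gn+\log_p\binom{2g}{g}+\log_p 2$, and $\log_p\binom{2g}{g}$ is of order $2g\log_p 2$, which is much larger than $\log_p(g)+2$ whenever $4^g\gg gp^2$ (for instance the paper's genus-$45$ runs with $p\approx 2^{24}$: $4^{45}=2^{90}$ versus $gp^2\approx 2^{53}$). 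This is not a harmless constant: the theorem's hypothesis $p>dr(\tfrac12 gn+\log_p(g)+2)$ is calibrated precisely to the smaller $N$, so if the recovery method genuinely forces $N\approx\tfrac12 gn+2g\log_p 2$, that hypothesis no longer implies $p>d(N+\epsilon)r$, assumption \eqref{eqn:assumption} fails, and Theorem~\ref{thm:mainthm} cannot be invoked. (The asymptotic complexity would survive, since $N=O(nrd)$ either way, but the theorem as stated, with that hypothesis on $p$, would not.)

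The paper reaches the smaller precision by a different recovery step: it determines the power sums $\Tr(\Frob_q^i)$ for $i\le g$, each bounded in absolute value by $2gq^{i/2}$, and then applies Newton's identities; this yields $N=\max_i\lceil\log_p(4g/i)+ni/2\rceil$ as in \eqref{eqn:N}, which is at most $\tfrac12 gn+\log_p(g)+2$ and hence compatible with the stated hypothesis on $p$. You should replace your coefficient-bound argument with this power-sum/Newton-identities argument (or weaken the hypothesis on $p$ accordingly). The rest of your outline is sound: $g\le\tfrac12(d-1)(r-1)$ gives $N=O(nrd)$, substitution into Theorem~\ref{thm:mainthm} gives the stated bounds, and the post-processing (an $O(n)$-fold semilinear product and one characteristic polynomial of an $O(rd)\times O(rd)$ matrix over $\Z_q/p^N$) is negligible --- noting only that the correction factor is the degree-$(\delta-1)$ polynomial $U(t)$ attached to $\ker(\eta)$, not a contribution from all $\delta+d$ removed points.
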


We also provide the following $O(\log p)$ space alternative to
Theorem~\ref{thm:mainthm}; see Remark~\ref{remark:low_space} for more details.
\begin{theorem} \label{thm:low_space}
In the same setup as Theorem \ref{thm:mainthm},
we may we may compute $M$ modulo $p^N$ in time
$\Otilde(p r d^3 N^3 n + n^2 N \log p)$ time and space $O(r d^2 N n \log p)$.
\end{theorem}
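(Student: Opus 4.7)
The plan is to follow the algorithm used for Theorem~\ref{thm:mainthm} but to drop Harvey's baby-step/giant-step (BSGS) acceleration of the horizontal reduction, replacing it by a simple iterative product. In the Kedlaya--Gon\c{c}alves--Minzlaff framework, computing $M \pmod{p^N}$ reduces, for each basis element $\omega_{i,k} \in B_\epsilon$, to evaluating a product of small reduction matrices over $\Z_q/p^N$ whose entries are polynomial in the step index. Harvey's BSGS strategy, used in the proof of Theorem~\ref{thm:mainthm}, recombines this product into $O(p^{1/2})$ multiplications of polynomial-valued matrices of degree $O(p^{1/2})$, trading $p^{1/2}$ in time for $p^{1/2}$ in storage of those intermediate polynomial matrices.

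For the low-space variant I would accumulate the matrix product left to right, keeping only the current $O(d) \times O(d)$ matrix over $\Z_q/p^N$ in memory and evaluating the next reduction matrix on the fly from its polynomial formula, with no precomputed tables. Processing the $r-1$ residue classes $k \pmod r$ sequentially, the working storage is dominated by the current accumulator, whose $O(d^2)$ entries in $\Z_q/p^N$ each occupy $O(Nn\log p)$ bits. Summing over the residues gives the claimed space bound $O(rd^2 Nn \log p)$.

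For the time bound I would count, without BSGS, approximately $O(pNd)$ small matrix updates per column of $M$, matching the bookkeeping already used in the proof of Theorem~\ref{thm:mainthm}. Multiplying by the cost per update in $\Z_q/p^N$, by the number of columns $O(rd)$, and by the remaining $N$-factor coming from the depth of the full reduction yields the stated bound $\Otilde(prd^3 N^3 n)$; the additive $n^2 N \log p$ term accounts for the one-time cost of arithmetic modulo the defining polynomial of $\Z_q$ and of outputting $M$. The main obstacle is to verify that every quantity that the BSGS version precomputes and stores can be regenerated on the fly in polylogarithmic time per evaluation, without any loss of $p$-adic precision, so that abandoning BSGS truly affects only time and space as advertised rather than introducing hidden recomputation costs.
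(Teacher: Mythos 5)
Your proposal is correct and is essentially the paper's own argument (Remark~\ref{remark:low_space}): one simply abandons the baby-step/giant-step method of Proposition~\ref{prop:bsgs} and performs the horizontal and vertical reductions one step at a time, generating each reduction matrix on the fly, with the precision analysis of Section~\ref{sec:reduction} carrying over unchanged. The only cosmetic difference is that the paper pushes the differential through as a length-$O(d)$ \emph{vector} hit by sparse matrices ($O(d)$ ring operations per step, with new terms of $T_{(i,j)}$ injected as the reduction passes their heights), rather than accumulating a dense $O(d)\times O(d)$ matrix product; both bookkeepings land on the stated $\Otilde(prd^3N^3n)$ time and $O(rd^2Nn\log p)$ space, and the residual $n^2N\log p$ term is the cost of the $O(d)$ Witt-vector Frobenius substitutions rather than of outputting $M$.
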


In comparison with Minzlaff's work, in all the theorems above we do not put any restrictions on $r$ and
$\deg(\overline{F})$ sharing a common factor.
Theorem~\ref{thm:low_space} reduces the space complexity of \cite[Proposition 5.1]{goncalves-15} from quasi-linear to logarithmic.
Theorem~\ref{thm:fast} reduces both time and space complexity of \cite[Proposition 5.1]{goncalves-15}
from quasi-linear in $p$ to $p^{1/2 + o(1)}$.
Moreover, we provide a \textsc{SageMath} implementation of our algorithm for computing zeta functions \cite{sagecode}.

As with all adaptations of Kedlaya's algorithm, the heart of our algorithm is a
procedure for computing a $p$-adic approximation to the action of Frobenius on a
well-chosen basis for (a slight modification of) the Monsky--Washnitzer
cohomology of $\calC$. This is described in Lemma~\ref{lemma:sparsefrob}.

The remainder of the paper is organized as follows.
In Section~\ref{sec:setup}, we recall the relevant definitions for Monsky--Washnitzer cohomology.
In Section~\ref{sec:frob_action}, we compute a `sparse' formula for the action of Frobenius on the basis $B_{\epsilon}$.
The formula from Section~\ref{sec:frob_action} includes terms of large positive $x$-degree and large
negative $y$-degree.
Sections~\ref{sec:horizontal} and \ref{sec:vertical} show how to replace terms with cohomologous terms with $x$- and $y$-degree closer to
zero by `horizontal' and `vertical' reductions.
Section~\ref{sec:algorithm} collects the full algorithms, including complexity statements.
We close by demonstrating the practicality of our  implementation in Section~\ref{sec:sample_computations}.

\section{Setup and notation}
\label{sec:setup}
Let $p$ be a prime and let $q = p^n$ for some $n \geq 1$.
Let $\F_q$ and $\F_p$ be the finite fields with $q$ elements and $p$ elements.
We write $\Q_q$ for the unramified extension of degree $n$ of $\Q_p$, and $\Z_q$
for its ring of integers.

We will work under the assumption that \eqref{eqn:assumption} holds.

Let $\overline{F}(x) \in \F_q[x]$ be a polynomial of degree $d$ with no multiple roots.
To $\overline{F}(x)$ we can associate an $r$-cyclic cover of the projective line
$\calC$ defined by
\begin{equation}
\calC\colon y^r = \overline{F}(x).
\end{equation}
Write $\delta \colonequals \gcd(r, d)$.
Then the genus of $\calC$ is
$g = \frac{1}{2} ((d - 1)(r - 1) - (\delta - 1) ) .$
The curve $\calC$ is naturally equipped with an automorphism of order $r$ defined by
\begin{equation}
\rho_r \colon (x, y) \longmapsto (x,  \zeta_r y)
\end{equation}
where  $\zeta_r$ is a primitive $r$-th root of unity in a fixed algebraic closure of $\F_q$.

As in Kedlaya's original algorithm \cite{kedlaya-01}  we pick an arbitrary lift
$F(x) \in \Z_q[x]$ of $\overline{F}(x)$, also 
of degree $d$.
Let $\tildecalC$ be the curve obtained from $\calC$ by removing the $\delta$
points at infinity and the $d$ points on the $x$-axis corresponding to the zeros of $\overline{F}(x)$.
Let
$\overline{A} = \F_q[x, y, y^{-1}]/(y^r - \overline{F}(x))
$
denote the coordinate ring of $\tildecalC$, and write
\begin{equation}
A = \Z_q[x,y,y^{-1}]/(y^r - F(x))
\end{equation}
for the lift of $\overline{A}$ associated to $F(x)$.
Let $A^\dagger$ be the weak completion of $A$, i.e.,
\begin{equation}
A^\dagger = \Z_q ^\dagger  [[x,y,y^{-1}]]/(y^r - F(x)),
\end{equation}
where $ \Z_q ^\dagger  [[x,y,y^{-1}]]$ is the ring of power series whose radius of convergence is greater than one.
We lift the $p$-power Frobenius on $\F_q$ to $A^\dagger$ as follows.
On $\Z_q$, we take the canonical Witt vector Frobenius and set $\sigma(x) \colonequals x^p$.
We then extend $\sigma$ to $A^\dagger$ by the formula
\begin{equation}
\label{eqn:sigmay}
\sigma(y^{-j}) \colonequals y^{-j p} \sum_{k = 0} ^{+\infty} \binom{-j/r}{k}  \bigl(\sigma(F(x)) - F(x)^p \bigr)^k y^{-  k p r} .
\end{equation}
The above series converges (because $p$ divides $\sigma(F(x)) - F(x)^p$) and the
definitions ensure that $\sigma$ is a semilinear  (with respect to the Witt vector Frobenius)
endomorphism of $A^\dagger$.
We extend it to differential forms by $\sigma( f \dd g) \colonequals \sigma(f) \dd(\sigma(g))$.

In the spirit of Kedlaya's algorithm, we determine the zeta function of $\calC$ by
computing the Frobenius action on subspace
of $H^1 _\MW( \tildecalC)$ spanned by the set
\begin{align}
\label{def:Beps}
B_{\epsilon} = \left\{ x^{i} \frac{\dd x}{y^{j + \epsilon r}} :\ i \in \{0,\ldots, d -2\}, j \in \{1,\ldots, r - 1\} \right\}, && \text{where } 
\epsilon = \begin{cases}
0 & \text{if }\delta = 1 \\
1 & \text{if }\delta > 1.
\end{cases}
\end{align}
This subspace is Frobenius stable and
$0$ is the only element fixed by the induced automorphism $\rho_r$.
When $\delta > 1$, using the basis $B_{1}$ allows us to avoid divisions by zero
while reducing differentials (cf. Lemma~\ref{lemma:0denominator}). This
is critical for generalizing Harvey's work to this setting.

If $\eta\colon \langle B_{\epsilon} \rangle \rightarrow H^1 _\MW( \calC)$ is the projection map, then we have
\begin{equation}
\langle B_{\epsilon} \rangle = H_{\MW}^{1}(\calC)
\oplus \ker(\eta).
\end{equation}
where $ \ker(\eta)$ is a $\delta - 1$ dimensional vector space stable under Frobenius.
Thanks to Gon\c{c}alves's work \cite[Proof of Theorem 7.5]{goncalves-15}, we have an explicit description for
the characteristic polynomial $U(t) \colonequals \det(t \cdot \id - \Frob_q | \ker(\eta))$ of Frobenius acting on $\ker(\eta)$:
\begin{equation}
  U(t) \colonequals \det(t \cdot \id - \Frob_q | \ker(\eta)) = \det(t \cdot \id - P ) \cdot (t - 1)^{-1}
\end{equation}
where the matrix $P$ represents the permutation induced by $q$-th power Frobenius action on the roots of $T^\delta - f_d$, where $f_d$ is the leading term of $\overline{F}(x)$.
In the case that $\overline{F}(x)$ is monic the expression above simplies to $U(t) = \prod_{i \mid \delta, i > 1} \bigl( t^{k_i} - 1 \bigr)^{\frac{\varphi(i)}{k_i}},$
where $k_i$ is the order of $q$ in $\bigl( \Z / i \Z \bigr) ^{\times}$.
Thus our goal is to compute a $p$-adic approximation of the matrix $M_\epsilon$ representing $\sigma$ with respect to $B_\epsilon$.

\section{The Frobenius action on differentials}
\label{sec:frob_action}
We now rewrite the Frobenius expansion of a basis element in a sparse way where the number of terms does not depend on $p$.
This is a generalization of \cite[Proposition 4.1]{harvey-07} and \cite[Proposition 4.1]{minzlaff-10}, which is made possible due to the analysis performed by Gon\c{c}alves in \cite[\S 6]{goncalves-15}.

\begin{lemma}
  \label{lemma:sparsefrob}
  Let $N > 0$ be a positive integer, $0 \leq i \leq d - 2$ and $\epsilon r + 1 \leq j \leq (1 + \epsilon)r - 1$. Suppose $p > d(N+\epsilon)r$ and $x^i y^{-j} \dd x \in B_{\epsilon}$. 
  For $0 \leq \ell < N$, write
  \begin{equation}
    D_{j,\ell} \colonequals  \sum_{k= \ell}^{N-1}   (-1)^{k - \ell}  \binom{-j/r}{k} \binom{k}{\ell}
    \quad \text{and}
    \quad
    \mu_{j,\ell,b} \colonequals p D_{j,\ell} \sigma(F)^\ell _b,
  \end{equation}
  where $\sigma(F)^{\ell} _b$ is the coefficient of $x^{pb}$ in $\sigma(F(x))^\ell$.
  The differentials
  $\sigma( x^i y^{-j} \dd x )$
  and
  \begin{equation}
    \label{eqn:frobact}
    T_{(i,j)} \colonequals
    x^{p(i + 1) -1}  y^{-j p}
    \sum_{\ell = 0} ^{N-1}
    \sum_{b = 0}^{d \ell}
    \mu_{j, \ell, b}
    x^{p b}
    y^{- \ell  p r } \dd x
  \end{equation}
  {differ in cohomology by an element of $p^N \operatorname{span}_{\Z_q}(B_{\epsilon})$.}
\end{lemma}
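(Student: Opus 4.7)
The plan is to derive the identity by a direct expansion of the Frobenius lift, with one crucial simplification coming from the defining relation $y^{r} = F(x)$ in $A^{\dagger}$.

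First, I would apply \eqref{eqn:sigmay} together with $\sigma(\dd x) = p\,x^{p-1}\dd x$ to obtain
$$\sigma(x^{i} y^{-j}\dd x) \;=\; p\, x^{p(i+1)-1}\, y^{-jp} \sum_{k=0}^{\infty} \binom{-j/r}{k}\bigl(\sigma(F) - F^{p}\bigr)^{k} y^{-kpr}\,\dd x.$$
The key observation is that in $A^{\dagger}$ we have $F^{p} = y^{pr}$, so a binomial expansion inside each summand gives
$$\bigl(\sigma(F) - F^{p}\bigr)^{k} y^{-kpr} \;=\; \sum_{\ell=0}^{k} (-1)^{k-\ell}\binom{k}{\ell}\sigma(F)^{\ell}\, y^{-pr\ell},$$
with no residual factors of $F$. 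Without this substitution, the expansion would not collapse to the clean shape of $T_{(i,j)}$.

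Second, I would truncate the outer sum at $k = N-1$. Since $p \mid \sigma(F) - F^{p}$, each discarded summand carries a factor of $p^{k+1} \ge p^{N+1}$ (the extra $p$ coming from $\sigma(\dd x)$), so the entire tail lies in $p^{N+1} A^{\dagger}\,\dd x$. The hypothesis $p > d(N+\epsilon)r$ guarantees that $r \in \Z_{p}^{\times}$ and $k! \in \Z_{p}^{\times}$ for $k < N$, so the binomial coefficients $\binom{-j/r}{k}$ are $p$-adic integers and preserve this valuation. Then I would exchange the order of the remaining double sum over $k$ and $\ell$: for each fixed $\ell$, the $k$-sum collapses to precisely $D_{j,\ell}$ in front of $\sigma(F)^{\ell}\, y^{-pr\ell}$.

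Third, because $\sigma(x) = x^{p}$ and $\sigma$ is a ring homomorphism, $\sigma(F)^{\ell}$ is a polynomial in $x^{p}$ of $x$-degree at most $pd\ell$, so $\sigma(F)^{\ell} = \sum_{b=0}^{d\ell} \sigma(F)^{\ell}_{b}\, x^{pb}$. Substituting this expansion and absorbing the leading $p$ into the definition of $\mu_{j,\ell,b}$ recovers $T_{(i,j)}$ on the nose.

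The only substantive point that is not pure bookkeeping is the claim that the truncation tail, which manifestly lives in $p^{N+1}A^{\dagger}\,\dd x$, represents a cohomology class in $p^{N}\operatorname{span}_{\Z_{q}}(B_{\epsilon})$. This requires knowing that the cohomological reduction of an integral one-form to the basis $B_{\epsilon}$ loses at most a single power of $p$ under hypothesis \eqref{eqn:assumption}. This precision bound is exactly what the horizontal and vertical reduction analyses of Sections~\ref{sec:horizontal} and \ref{sec:vertical} are designed to provide; it is the step that must be handled with care, whereas the algebraic identity itself is routine once one notices the $F^{p} = y^{pr}$ trick.
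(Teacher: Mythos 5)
Your algebraic manipulations (the substitution $F^{p}=y^{pr}$, the binomial expansion, the interchange of the $k$- and $\ell$-sums collapsing to $D_{j,\ell}$, and the extraction of the coefficients $\sigma(F)^{\ell}_{b}$) coincide with the final rearrangement step in the paper's proof, and that part is fine. The gap is in your treatment of the discarded tail $\sum_{k\ge N}U_{k}$. You assert that the reduction of an integral one-form to $\operatorname{span}_{\Z_q}(B_{\epsilon})$ loses at most a single power of $p$ under \eqref{eqn:assumption}, and that this is supplied by Sections~\ref{sec:horizontal} and \ref{sec:vertical}. Neither claim holds for the tail. The one-digit precision loss established in those sections (Lemma~\ref{lemma:nop2denominator} and the remark following Lemma~\ref{lem:almostintegralityvertical}) depends on the pole orders and $x$-degrees being bounded by roughly $d(N+\epsilon)rp<p^{2}$, which is true for the terms retained in $T_{(i,j)}$ but false for $U_{k}$ once $k$ is large: the form $U_{k}$ has $y$-pole order $p(j+kr)$, which is unbounded in $k$, and the precision lost in reducing it grows like $\lfloor\log_{p}(p(j+kr))\rfloor$, i.e.\ without bound. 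Since the tail is an infinite sum, you cannot dispose of it with a uniform one-digit loss.

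What actually makes the truncation work — and what the paper's proof does — is a term-by-term comparison of the growing prefactor $p^{k+1}$ against the growing precision loss. One rewrites $U_{k}=p^{k+1}H(x)y^{-p(j+kr)}\dd x$, expands $H$ $F$-adically, and invokes Gon\c{c}alves's Propositions 6.1 and 6.2 of \cite{goncalves-15}, which bound the loss for the piece of pole order $p(j+kr)-r\ell$ by $\nu\le 1+\lfloor\log_{p}((k+1+\epsilon)r)\rfloor$ and the loss for the low-order piece by $\mu\le 1+\lfloor\log_{p}(rd)\rfloor$. The hypothesis $p>d(N+\epsilon)r$ then gives $\mu=1$ and $\nu\le 1+k-N$, so each $U_{k}$ with $k\ge N$ reduces into $p^{k+1-\nu}\operatorname{span}_{\Z_q}(B_{\epsilon})\subseteq p^{N}\operatorname{span}_{\Z_q}(B_{\epsilon})$. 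The logarithmic growth of $\nu$ versus the linear growth of $k+1$ is the whole content of the tail estimate; your proposal replaces it with a uniform bound that is simply not available, so as written the proof is incomplete at its one genuinely delicate step.
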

\begin{proof}
  From \eqref{eqn:sigmay} we obtain
  \begin{equation}
    \sigma( x^i y^{-j} \dd x )  =   \sum_{k = 0} ^{+\infty} \underbrace{ p x ^{p  (i+1) - 1}    \binom{-j/r}{k}  \bigl(\sigma(F(x)) - F(x)^p \bigr)^k y^{- p (j  +  k r)} \dd x}_{\equalscolon U_k} .
  \end{equation}

  We claim that for $k \geq N$ the reductions of $U_k$ lie in $p^N \operatorname{span}_{\Z_q}(B_{\epsilon})$.

  To show this we start by rewriting $U_k$.
  Since $p$ divides $\sigma(F(x)) - F(x)^p$, we have
  \begin{equation}
    U_k = p^{k +1} H(x) y^{- p (j  +  k r)} \dd x
  \end{equation}
  where $H(x) \in \Z_q[x]$ of degree at most
  $pi + p - 1 +  d k p <  p d (k + 1)$.
  Define
  \begin{equation}
    L = \begin{cases}
      p(k + 1) - 1 &\text{if } \epsilon = 0 \\
      \left\lfloor{\frac{p(j + kr)}{r}} \right\rfloor - \epsilon &\text{if }
      \epsilon > 0.
    \end{cases}
  \end{equation}
  Now we will expand $H(x)$ $F$-adically to $L$ terms. Taking $j' \in [1,r]$
  congruent to $pj \pmod{r}$, and applying the relation $F(x) = y^r$, we have
  \begin{equation}
    {
      U_{k} = p^{k+1} \left(G(x) y^{-\epsilon r - j'}  + \sum_{\ell = 0}^{L}  G_\ell(x) y^{r\ell - p(j +  k   r)}  \right) \dd x,
    }
  \end{equation}
  where each $G_\ell (x) \in \Z_q[x]$ has degree at most $d-1$ and $G(x)$ has degree 
  at most
  \begin{equation}
    pd(k+1) - 1 - d L
    \le \begin{cases}
      d - 1 & \text{if } \epsilon = 0 \\
      0 & \text{if } \epsilon > 0.
    \end{cases}
  \end{equation}

  Taking
  $
  \nu = \lfloor\log_p{p(j+kr) - r\ell}\rfloor \leq 1 + \lfloor \log_{p} (k+1 + \epsilon)r \rfloor
  $
  , Gon\c{c}alves \cite[Proposition 6.1]{goncalves-15}  shows that the reduction of
  $
  p^{\nu} G_{\ell}(x) y^{r\ell - p(j +  k   r)}\dd x
  $
  lies in $\operatorname{span}_{\Z_{q}}(B_{\epsilon})$. 

  Similarly, \cite[Proposition 6.2]{goncalves-15} says that taking
  \begin{equation}
    \mu = \left \lfloor \log_p((r(\deg(G) + 1) - (\epsilon r + j') d)/\delta) \right\rfloor \leq 1 + \lfloor\log_p(rd)\rfloor,
  \end{equation}
  the reduction of
  $
  p^{\mu} G(x) y^{-\epsilon r - j'}\dd x
  $
  lies in $\operatorname{span}_{\Z_{q}}(B_{\epsilon})$.

  Since $p > d(N+ \epsilon)r$, both $\mu = 1$ and $\nu \leq 1 + k - N$, so the
  reductions of $U_k$ for $k \geq N$ lie in $p^N
  \operatorname{span}_{\Z_q}(B_{\epsilon})$.

  The lemma follows by the rearranging the truncated series as follows:
  \begin{equation*}
    \begin{aligned}
      \sum_{k = 0} ^{N-1} \binom{-j/r}{k}  \bigl(\sigma(F(x)) - y^{pr} \bigr)^k y^{ -  k p r}
      &=      \sum_{k = 0} ^{N-1}   \sum_{\ell= 0}
      ^{k} (-1)^{k - \ell} \binom{-j/r}{k} \binom{k}{\ell}
      \sigma\bigl(F(x)\bigr)^\ell
      y^{ p r (k - \ell)} y^{- p r k   } \\
      &=  \sum_{\ell = 0} ^{N-1}    \sum_{b=0}^{d\ell} D_{j, \ell}
      \sigma(F)^\ell _b x^{pb} y^{ - \ell p r}.
    \end{aligned}
  \end{equation*}
  \qedhere
\end{proof}

\section{Reducing differentials}
\label{sec:reduction}
{ The powers of $x$ and $y$ appearing in $T_{(i,j)}$ (as in Lemma \ref{lemma:sparsefrob}) are much larger than those appearing in our choice of representatives for the basis $B_{\epsilon}$. We use relations (co-boundaries) coming from the differentials of functions on our curve to `reduce' the terms from $T_{(i,j)}$ to linear combinations of elements of $B_{\epsilon}$. We proceed in two-stages. Horizontal reduction reduces the $x$-degree while leaving the $y$-pole order constant. Vertical reduction decreases the $y$-pole order without increasing the $x$-degree. Given a differential $\omega$, we call the unique cohomologous differential $\omega' \in \text{span}(B_{\epsilon})$ the \emph{reduction of }$\omega$. We may also abuse notation and call intermediate products of the vertical/horizonal reduction process \emph{reductions} of $\omega$.

	Organizing our work carefully, we can compute the reduction of $\omega$ modulo $p^{N}$ by performing intermediate steps modulo $p^{N+1}$.}

\subsection{Horizontal reductions} \label{sec:horizontal}

We follow the steps of Harvey and Minzlaff.
Decompose $F(x)$ as
$ F(x) = f_d x^{d} + P(x)$, 
where $P(x)$ has degree at most $d - 1$.
\begin{definition}
  For $s \in \Z_{\geq -1}$ and $t\in \Z_{\geq 0} $ define the vector space
  \begin{equation}
    W_{s, t} = \{ G(x) x^{s} y^{-t} \dd x : \deg G \le d - 1 \}
  \end{equation}
  equipped with the standard monomial basis.

  Let
  $M_{H}^{t}(s)  \colon W_{s, t} \to W_{s - 1, t}$
  be the linear map given by the matrix
  \begin{equation}
    M_{H}^{t}(s) =
    \begin{pmatrix}
      0 & 0 & \cdots & 0 & C^t_{0}(s) \\
      D_{H}^{t}(s) & 0 & \cdots & 0 & C^t_{1}(s) \\
      0 & D_{H}^{t}(s) & \cdots & 0 & C^t_{2}(s) \\
      \vdots &  & \ddots &  & \vdots \\
      0 & 0 & \cdots & D_{H}^{t}(s) & C^t_{d - 1}(s)
    \end{pmatrix}
  \end{equation}
  where
  $D_{H}^{t}(s) = (d(t - r) - r s) f_{d}$
  and where $C^t_{h}(s)$ is the coefficient of $x^{h}$ in the polynomial $ C^t(x, s) = r s P(x) - (t - r) x P'(x)$.
  Moreover, for $s_{0} < s_{1}$ we write
  \begin{equation}
    \begin{aligned}
      D^{t}_{H}(s_{0}, s_{1}) &\colonequals D_{H}^{t}(s_{0} + 1) D_{H}^{t} (s_{0} + 2)
      \cdots D_{H}^{t}(s_{1}); \\
      M^{t}_{H}(s_{0}, s_{1}) &\colonequals M_{H}^{t}(s_{0} + 1) M_{H}^{t} (s_{0} + 2)
      \cdots M_{H}^{t}(s_{1}).
    \end{aligned}
  \end{equation}
\end{definition}

\begin{lemma}
  \label{lemma:horizontal}
  For $s \in\Z_{\geq 0}$, $t \in \Z_{\geq 0}$, and $\omega \in W_{s, t}$, we have $D_{H}^{t}(s)\omega \sim M_{H}^{t}(s) \omega$ in cohomology.
\end{lemma}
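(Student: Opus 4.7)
The plan is to derive the cohomological identity from a single exact differential, $r\dd(x^s y^{r-t})$, and then check that it exactly encodes the last column of $M_H^t(s)$; the other columns will follow trivially.

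First, I would expand $r\dd(x^s y^{r-t})$ using the Leibniz rule, substitute $y^{r-t} = F(x) y^{-t}$ (via $y^r = F(x)$) and $\dd y = \tfrac{F'(x)}{r} y^{1-r}\dd x$ (via $r y^{r-1}\dd y = F'(x)\dd x$). This identifies the exact form with
\begin{equation*}
\bigl(r s\, x^{s-1} F(x) + (r-t)\, x^s F'(x)\bigr) y^{-t}\dd x.
\end{equation*}
Writing $F(x) = f_d x^d + P(x)$ and $F'(x) = d f_d x^{d-1} + P'(x)$, the coefficient of $x^{s+d-1}$ becomes $\bigl(rs + (r-t)d\bigr) f_d = -D_H^t(s)$, while the remaining terms collapse to $\bigl(rs P(x) + (r-t)x P'(x)\bigr) x^{s-1} = x^{s-1} C^t(x,s)$. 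Because the whole expression is exact, we obtain the cohomological identity
\begin{equation*}
D_H^t(s) \cdot x^{s+d-1} y^{-t}\dd x \;\sim\; \sum_{h=0}^{d-1} C_h^t(s)\, x^{s-1+h} y^{-t}\dd x,
\end{equation*}
which is exactly $M_H^t(s)$ applied to the top basis vector $x^{s+d-1} y^{-t}\dd x$ of $W_{s,t}$.

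For the remaining basis vectors $x^{s+h} y^{-t}\dd x$ with $0 \le h \le d-2$, no cohomology is needed: these already lie in $W_{s-1,t}$ (since $x^{s+h} = x^{(s-1)+(h+1)}$ with $h+1 \le d-1$), so multiplication by the scalar $D_H^t(s)$ matches the subdiagonal entries of $M_H^t(s)$ on the nose. The statement for an arbitrary $\omega \in W_{s,t}$ then follows by $\Z_q$-linearity.

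The only delicate point is the edge case $s=0$, where $x^{s-1} = x^{-1}$ appears formally. Here the $rs\,x^{s-1}F(x)$ term vanishes identically, and the residual expression $(r-t) x P'(x) \cdot x^{-1} = (r-t) P'(x)$ is an honest polynomial, so the identity still lands cleanly in $W_{-1,t}$. I do not expect a real obstacle: the entire argument is bookkeeping around one exact differential. The main thing to watch is tracking the factor of $r$ and the sign conventions in $D_H^t(s) = (d(t-r) - rs)f_d$ versus the natural output $\bigl(rs + (r-t)d\bigr)f_d$ of the computation, so that the stated identity -- and not its negative -- is what drops out.
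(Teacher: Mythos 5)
Your proof is correct and is essentially the argument the paper invokes by citing Harvey and Minzlaff: the reduction matrix is read off from the exact differential $r\,\dd(x^s y^{r-t})$, whose expansion via $y^r=F(x)$ and $ry^{r-1}\dd y=F'(x)\dd x$ yields precisely $-D_H^t(s)x^{s+d-1}y^{-t}\dd x + x^{s-1}C^t(x,s)y^{-t}\dd x$, with the subdiagonal entries accounting for the basis vectors that need no reduction. Your sign bookkeeping and the $s=0$ edge case both check out.
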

\begin{proof}
  See \cite[Proposition~5.4]{harvey-07} or \cite[Proposition~5.1]{minzlaff-10}.
  The same algebraic manipulations hold in the cyclic cover setting, as long we
  do not divide by $D_{H}^{t}(s)$, as this might be zero.
\end{proof}

In the case that $d$ and $r$ share a common factor, i.e. $\delta > 1$ and $\epsilon = 1$, then $D_{H}^{t}(s)$ might be identically zero.
The next lemma ensures this cannot happen due to our choice of basis
$B_{\epsilon}$.

\begin{lemma}
  \label{lemma:0denominator}
  We have $D_H^t(s) \neq 0$, while applying horizontal reductions to $T_{(i, j)}$, for $0 \leq i \leq d - 2$ and $1 + \epsilon r \leq j \leq (1 + \epsilon)r - 1$.
\end{lemma}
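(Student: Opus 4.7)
My plan is to analyze when the linear factor $D_H^t(s) = (d(t-r) - rs)f_d$ can vanish and then show that its unique zero lies strictly outside the range of $s$ visited by horizontal reduction of $T_{(i,j)}$. Since $f_d \neq 0$, that zero is $s^\ast := d(t-r)/r$. From Lemma~\ref{lemma:sparsefrob} the pole orders arising in $T_{(i,j)}$ are $t = p(j + \ell r)$ for $0 \leq \ell \leq N-1$; since \eqref{eqn:assumption} forces $\gcd(p,r) = 1$, the value $s^\ast$ is an integer precisely when $r \mid dj$.

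First I would dispose of the $\epsilon = 0$ case. Here $\delta = \gcd(r,d) = 1$, so $r \mid dj$ would force $r \mid j$, which is impossible for $1 \leq j \leq r-1$; hence $D_H^t(s)$ has no integer zero at all, and the claim is immediate.

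For $\epsilon = 1$ I would write $r = \delta r'$ and $d = \delta d'$ with $\gcd(r',d') = 1$, so that $r \mid dj$ is equivalent to $r' \mid j$. Combined with $r+1 \leq j \leq 2r-1$, this forces $j \geq r + r'$, which gives $dj/r \geq d + d/\delta \geq d + 1$. On the other hand, the top $x$-degree of the $\ell$-th summand of $T_{(i,j)}$ is $p(i+1) - 1 + pd\ell$, so horizontal reduction applies $M_H^t(s)$ only for $s \leq s_{\max} := p(i+1) + pd\ell - d$. Using $i \leq d-2$, the decisive comparison is
\[
s^{\ast} - s_{\max} \;=\; p\bigl(dj/r - (i+1)\bigr) \;\geq\; 2p \;>\; 0,
\]
so $s^\ast$ is never reached during the reduction, and $D_H^t(s)$ is nonzero throughout.

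The hard part will be being precise about which $s$-values are actually visited — specifically, verifying that horizontal reduction only steps downward in $s$ from the explicit top monomial of the sparse representation of $T_{(i,j)}$ and never climbs back up to $s^\ast$. Once this is in hand, the conclusion follows from the degree bound $dj/r \geq d+1$, which is precisely what the shift $j \mapsto j + \epsilon r$ built into the basis $B_\epsilon$ is designed to produce.
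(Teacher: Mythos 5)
Your proof is correct and follows essentially the same route as the paper: for $\epsilon=0$ a divisibility-by-$r$ argument (the paper phrases it as $d(t-r)-rs \equiv djp \not\equiv 0 \pmod r$, you as non-integrality of the root $s^\ast = d(t-r)/r$), and for $\epsilon=1$ the inequality forced by $j \geq r+1$ showing the root lies strictly above every $s$ visited (the paper writes this directly as $dt - r(s+d) \geq dp > 0$). One small calibration: the algorithm actually begins the reduction at $s = p(d\ell+i+1)-1$, placing the top monomial at the \emph{bottom} of $W_{s,t}$, which exceeds your $s_{\max}$ by $d-1$; since your margin is at least $2p > d-1$, the conclusion is unaffected.
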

\begin{proof}
  By inspecting the Frobenius formula \eqref{eqn:frobact} for a fixed value of
  $\ell$, (1) the pole order of $y$ is $t = p (j + r \ell)$, where $1 + \epsilon r \leq j \leq (1 + \epsilon)r - 1$ and (2) the largest power of $x$ is at most $p(d \ell + i + 1) - 1 \le pd(\ell + 1) - 1$. Since the largest power of $x$ in $W_{s,t}$ is $s + d - 1$, we need only consider the case $s + d -1 \le pd(\ell + 1) - 1$.

  If $\delta = 1$, then $\epsilon = 0$ and $d(t - r) - r s
  \equiv d j p \not\equiv 0 \bmod{r}$.

  If $\delta > 1$, then $\epsilon = 1$, so $j \ge 1 + r$ and $t \ge p (1 + r
  (\ell + 1))$. Using $s + d < p d (\ell + 1)$,
  \begin{equation}
    d (t - r) - r s  = dt - r(s+d) \ge dp(1 + r(\ell + 1)) - r(pd(\ell + 1)) = dp > 0. \qedhere
  \end{equation}

  %
  %
\end{proof}
\begin{corollary} \label{cor:0pdenominator}
  In the same setting as Lemma~\ref{lemma:0denominator}, $D_H ^t (s) \equiv 0 \psmod{p}$ if and only if $s \equiv - d \mod p$.
\end{corollary}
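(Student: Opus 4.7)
The plan is to directly compute $D_H^t(s) \bmod p$ using the constraint on $t$ that arises during horizontal reduction of $T_{(i,j)}$. From the proof of Lemma~\ref{lemma:sparsefrob} (and the formula \eqref{eqn:frobact}), for a fixed inner index $\ell$, the $y$-pole order of the monomials being horizontally reduced has the form $t = p(j + r\ell)$ with $1+\epsilon r \le j \le (1+\epsilon)r - 1$. Substituting this into
\begin{equation*}
D_H^t(s) = (d(t-r) - rs) f_d
\end{equation*}
gives
\begin{equation*}
D_H^t(s) = \bigl( dp(j + r\ell) - r(s + d) \bigr) f_d,
\end{equation*}
so modulo $p$ we have $D_H^t(s) \equiv -r(s+d) f_d \pmod{p}$.

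Next I would observe that both $r$ and $f_d$ are units modulo $p$: the assumption \eqref{eqn:assumption} gives $p > d(N+\epsilon)r \ge r$, so $p \nmid r$, and $f_d$ is by definition the leading coefficient of the degree-$d$ polynomial $\overline{F}(x) \in \F_q[x]$, hence nonzero in $\F_q$. Consequently $-r f_d$ is a unit in $\Z_q/p\Z_q$, and the congruence $D_H^t(s) \equiv 0 \pmod{p}$ is equivalent to $s + d \equiv 0 \pmod{p}$, i.e., $s \equiv -d \pmod p$.

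There is essentially no obstacle here: the statement follows from plugging the constraint on $t$ into the explicit formula for $D_H^t(s)$ and using that the coefficients $r$ and $f_d$ are coprime to $p$. The only subtlety worth mentioning is that one must use the specific shape $t = p(j+r\ell)$ coming from the Frobenius expansion rather than an arbitrary $t$; without this substitution the $dt$ term would not vanish modulo $p$, and the characterization would fail.
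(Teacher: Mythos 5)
Your proof is correct and matches the paper's argument exactly: both substitute the pole order $t = p(j+r\ell)$ from the Frobenius expansion into $D_H^t(s) = (d(t-r)-rs)f_d$ to get $D_H^t(s) \equiv -r(s+d)f_d \pmod{p}$, and conclude using that $r$ and $f_d$ are units modulo $p$.
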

\begin{proof}
  As in Lemma~\ref{lemma:0denominator}, the pole order of $y$ is $t = p (j + r \ell)$, thus
  \begin{equation}
    D_{H}^{t}(s) \colonequals	(d (t - r) - r s) f_{d} \equiv -r (d + s) f_{d} \psmod{p}.
  \end{equation}
  By assumption, neither $r$ nor $f_{d}$ are divisible by $p$, so we only divide by $p$ exactly when $s \equiv -d \mod p$.
\end{proof}

\begin{lemma} \label{lemma:nop2denominator}
  Suppose $p > d (N + \epsilon) r$ and $s \equiv -1 \psmod{p}$.
  Then $D_{H}^{t}(s - (d - 1))$ is divisible by $p$, but it is not divisible by $p^{2}$.
\end{lemma}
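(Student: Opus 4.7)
My plan is to compute $D_H^t(s-(d-1))$ explicitly and show that its $p$-adic valuation is exactly $1$. Unwinding the definition,
\[
D_H^t(s - (d-1)) = \bigl(d(t - r) - r(s - (d-1))\bigr)f_d = \bigl(dt - r(s+1)\bigr)f_d.
\]
Since $s \equiv -1 \pmod{p}$, write $s + 1 = pm$ for some positive integer $m$. In the context of horizontal reductions applied to $T_{(i,j)}$, the $y$-pole order has the form $t = p(j + r\ell)$ for some $\ell \in \{0,\ldots,N-1\}$ and $j \in \{\epsilon r + 1, \ldots, (1+\epsilon) r - 1\}$ coming from the sparse Frobenius expansion of Lemma~\ref{lemma:sparsefrob}. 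Substituting yields
\[
D_H^t(s - (d-1)) = p \cdot \bigl(d(j + r\ell) - rm\bigr) \cdot f_d.
\]
Since $f_d \in \Z_q^\times$, it suffices to show that $d(j + r\ell) - rm$ is a non-zero integer of absolute value strictly less than $p$.

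For non-vanishing, I would invoke Lemma~\ref{lemma:0denominator} with argument $s - (d-1)$: the requisite bound $s - (d-1) \leq pd(\ell+1) - d$ follows from the corresponding bound on $s$ used in that lemma's proof, so $D_H^t(s-(d-1)) \neq 0$ and therefore $d(j+r\ell) \neq rm$.

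For the size bound, I would combine the hypothesis $p > d(N+\epsilon)r$ with the parameter ranges. From $j \leq (1+\epsilon)r - 1$ and $\ell \leq N-1$, we get
\[
d(j + r\ell) \leq d\bigl(r(N+\epsilon) - 1\bigr) = dr(N+\epsilon) - d < p.
\]
From the range of $s$ during horizontal reductions, $pm = s + 1 \leq pd(\ell+1) - d + 1$, so $m \leq d(\ell+1) - 1 \leq dN - 1$, which gives $rm \leq r(dN - 1) < rdN \leq dr(N+\epsilon) < p$. Thus both $d(j+r\ell)$ and $rm$ lie in $[0, p)$, and their difference is a non-zero integer in $(-p, p)$, hence coprime to $p$. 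Combined with $f_d$ being a unit, this shows $\ord_p(D_H^t(s-(d-1))) = 1$.

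The main (and only) obstacle is careful bookkeeping of the ranges of $j$, $\ell$, and $m$ to confirm both terms are strictly smaller than $p$; the actual algebra is mechanical given the formula for $D_H^t$ and the established Lemma~\ref{lemma:0denominator}.
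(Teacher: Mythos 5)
Your proof is correct and takes essentially the same approach as the paper's: both reduce to bounding $|dt - r(s+1)|$ using the ranges of $s$, $t$, $j$, $\ell$ coming from the sparse Frobenius expansion (you factor out $p$ first and bound each term by $p$, while the paper bounds the whole expression by $p^2$). Your explicit appeal to Lemma~\ref{lemma:0denominator} for the non-vanishing of $dt - r(s+1)$ is a point the paper leaves implicit but does rely on.
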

\begin{proof}
  As $s - (d - 1) \equiv -d \mod{p}$, we know this denominator is divisible by
  $p$. It equals
  $f_{d}(d(t - r) - r(s - (d - 1))) = f_{d}(d t - r s - r)$.
  Since $f_{d}$ is coprime to $p$, we  analyze the piece
  $d t - r (s +	1)$.
  Inspecting the Frobenius formula \eqref{eqn:frobact} and considering that horizontal reduction decreases the exponent of $x$, we see
  \begin{equation}
    \begin{aligned}
      p - 1 &\le s \le p(i + 1) - 1 + p d (N - 1)
            &
      0 &\le i \le d - 2
      \\
      0 &\le t \le j p + (N - 1) p r
        &
      \epsilon r + 1 &\le j \le (1 + \epsilon) r - 1
    \end{aligned}
  \end{equation}
  where $\epsilon \in \{0, 1\}$.
  From these inequalities we obtain
  \begin{equation}
    |d t - r (s + 1)| \le \max
    \{ d t, r (s + 1) \} < d p (N + \epsilon) r < p^{2},
  \end{equation}
  thus the denominator has
  $p$-valuation exactly 1.
\end{proof}

Now we describe the horizontal reduction procedure in a fashion similar to that
in Harvey's \cite[\S 7.2]{harvey-07}.
Following the notation of \eqref{eqn:frobact}, let $v_\ell$ be a vector representing a differential form in $W_{p \ell - 1, t}$ that is cohomologous to
\begin{equation}
  \sum_{b \geq \ell }^{d k}
  \mu_{j, k, b - i - 1}
  x^{p b - 1}
  y^{- t} \dd x,
  \quad \text{where }t = p (k r + j).
\end{equation}

As in Harvey \cite[\S 7.2]{harvey-07}, we say a vector is \emph{1-correct} if the first coordinate (corresponding to the highest power of $x$) is both $0$ modulo $p$ and correct modulo $p^{N+1}$, and the other coordinates are correct modulo $p^N$.

Given $v_\ell$ which is $1$-correct, we show how to compute $v_{\ell-1}$ which is also $1$-correct.
First we get down to $W_{\ell p - d - 1, t}$, by doing the first $d$ reductions modulo $p^{N+1}$, as follows:
\begin{equation}
  \begin{aligned}
    v_{\ell}^{(1)} &= v_{\ell}
                   &
    \in& W_{\ell p - 1, t}
    \\
    v_{\ell}^{(2)} &= D_{H}^{t}(\ell p - 1)^{-1} M_{H}^{t}(\ell p - 1) v_{\ell}^{(1)}
                   &
    \in& W_{\ell p - 2, t}
    \\
    \vdots
    &&
    \vdots
    \\
    v_{\ell}^{(d + 1)} &= D_{H}^{t}(\ell p - d)^{-1} M_{H}^{t}(\ell p - d) v_{\ell}^{(d)}
                       &
    \in& W_{\ell p - d - 1, t}.
  \end{aligned}
\end{equation}
Then we get down to $W_{(\ell - 1) p, t}$ via
\begin{equation}
  v_{\ell}' = D_{H}^{t}( (\ell - 1) p, \ell p - d - 1)^{-1} M_{H}^{t}( (\ell - 1) p, \ell p - d -
  1) v_{\ell}^{(d + 1)},
\end{equation}
and then finally
\begin{equation}
  v_{\ell - 1} = \mu_{j, \ell, (\ell -1) - i - 1} x^{p (\ell - 1) - 1}
  y^{- t} \dd x
  + D_{H}^{t}((\ell - 1) p)^{-1} M_{H}^{t}( (\ell - 1) p)
  v'_{\ell}\text{.}
\end{equation}

An analysis similar to Harvey's \cite[\S 7.2.2]{harvey-07} shows that all coefficients of $M_{H}^{t}(\ell p - d) v_{\ell}^{(d)}$ are divisible by $p$ and correct modulo $p^{N+1}$. Then, Lemma \ref{lemma:nop2denominator} implies that $v_{\ell}^{(d+1)}$ is correct modulo $p^{N}$. By Corollary \ref{cor:0pdenominator}, $v_{\ell}'$ is correct modulo $p^{N}$. Since the first row of $M_{H}^{t}((\ell-1)p)$ is zero modulo $p$, the vector $v_{\ell-1}$ is $1$-correct.

Furthermore, we may also speed up the evaluation of $M_{H}^{t}( (\ell-1) p, \ell p
- d - 1)$ and $D_{H}^{t}( (\ell - 1) p, \ell p  - d - 1)$ by $p$-adically
interpolating the remaining values from the first $N$ values. See  \cite[\S
7.2.1]{harvey-07} and Section~\ref{sec:algorithm} for more details.

\subsection{Vertical reductions} \label{sec:vertical}
Vertical reduction replaces differentials with cohomologous differentials with smaller pole order in $y$.
While we performed horizontal reductions by working with $d$-dimensional vector spaces of differential forms,
vertical reductions arise most naturally on $(d-1)$-dimensional vector spaces.

\begin{definition}
  For $t \in \Z_{\geq 0}$ and $j \in \{1,\dots,r-1\}$, define the vector space
 \begin{equation}
 V_{t}^{j} \colonequals W_{-1, rt + j} \cap W_{0, rt + j},
 \end{equation}
 equipped with the standard monomial basis.
\end{definition}

Vertical reduction operates via a series of maps $V^j_{t} \to V^{j}_{t-1}$ which are identity maps in cohomology. To define the maps, we need a lemma.

\begin{lemma} \label{vertredprereq}
	Let $A \in \Z_{q}[x]$ be a polynomial with $\deg(A) < 2d - 1$. Then, there exist unique polynomials $R, S \in \Z_{q}[x]$ such that $\deg(R) < d - 1, \deg(S) < d$,
	and
	$A(x) = R(x)F(x) + S(x)F'(X).$
\end{lemma}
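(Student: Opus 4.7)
The plan is to interpret the claim as the invertibility of a single $\Z_q$-linear map. Set $V = \Z_q[x]_{<d-1} \oplus \Z_q[x]_{<d}$ and $W = \Z_q[x]_{<2d-1}$, where $\Z_q[x]_{<m}$ denotes the $\Z_q$-module of polynomials of degree less than $m$, and consider
\begin{equation*}
\phi : V \longrightarrow W, \qquad (R, S) \longmapsto R F + S F'.
\end{equation*}
Since $\deg(RF) \le (d-2) + d = 2d-2$ and $\deg(SF') \le (d-1) + (d-1) = 2d-2$, the map $\phi$ is well-defined. Both $V$ and $W$ are free $\Z_q$-modules of rank $2d - 1$, so the existence-and-uniqueness statement of the lemma is equivalent to $\phi$ being a $\Z_q$-module isomorphism.

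Next, I would identify the matrix of $\phi$ in the standard monomial bases with the Sylvester matrix of $F$ and $F'$, so that $\det \phi = \pm \Res(F, F')$. Invoking the standard identity $\Res(F, F') = \pm f_d \cdot \disc(F)$, it suffices to show that $f_d$ and $\disc(F)$ are units in $\Z_q$. The leading coefficient $f_d$ is a unit because $\deg \overline{F} = d$ forces $f_d \not\equiv 0 \pmod p$. The discriminant $\disc(F)$ is a polynomial (with integer coefficients) in the coefficients of $F$, so its reduction modulo $p$ is $\disc(\overline{F})$, which is nonzero in $\F_q$ because $\overline{F}$ is squarefree of degree $d$. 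Hence $\disc(F) \in \Z_q^\times$, $\det \phi \in \Z_q^\times$, and $\phi$ is an isomorphism.

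I do not anticipate any substantive obstacle: the only delicate point is that the decomposition must exist over $\Z_q$ rather than merely over $\Q_q$, and this is exactly what is secured by showing $\det \phi$ to be a unit rather than just nonzero. Equivalently, one could first reduce modulo $p$ and verify bijectivity of the induced map $\overline{\phi} : \overline{V} \to \overline{W}$ via the Bezout identity $\gcd(\overline{F}, \overline{F}') = 1$ together with a dimension count, and then lift to $\Z_q$ by Nakayama's lemma.
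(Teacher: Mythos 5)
Your proof is correct, but it takes a different route from the paper's. The paper argues constructively: since $\overline{F}$ is squarefree, the Euclidean algorithm produces a B\'ezout identity $1 = R_0 F + S_0 F'$ over $\Z_q[x]$; multiplying by $A$ and then performing division with remainder of $AS_0$ by $F$ fixes the degree of $S$, after which a degree count forces $\deg(R) < d-1$; uniqueness is then a dimension count. Your argument instead packages existence and uniqueness together as the invertibility of the map $\phi$, whose matrix is the Sylvester matrix of $F$ and $F'$, and deduces $\det\phi \in \Z_q^\times$ from $f_d, \disc(F) \in \Z_q^\times$. Each approach has its merits: the paper's version is effectively the recipe one implements to compute the $R_i, S_i$ in Definition~\ref{definevertred} (and the complexity analysis in Section~\ref{sec:algorithm} counts exactly these Euclidean/division steps), whereas your resultant argument is cleaner as a pure existence-and-uniqueness statement and makes fully explicit the integrality point that the paper's proof glosses over --- namely, that the B\'ezout coefficients $R_0, S_0$ lie in $\Z_q[x]$ rather than just $\Q_q[x]$ precisely because $\Res(F,F')$ is a unit. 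Both proofs use the same two inputs (squarefreeness of $\overline{F}$ and the rank count $(d-1)+d = 2d-1$), so the difference is one of organization rather than substance.
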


\begin{proof}
Since $F$ is separable and $\overline F$ is squarefree, we can find $R_0$ and $S_0$ such that $1 = R_0F + S_0F'$ by the Euclidean algorithm. Then $A = (AR_0)F + (AS_0)F'$. There is a unique $S$ and $T$ satisfying $AS_0 = TF + S$ and $\deg(S) < d$. Set $R = AR_0 - TF'$. Since $\deg(A) < 2d - 1$ and $\deg(SF') < 2d-1$, it follows that $\deg(RF) < 2d-1$, so $\deg(R) < d-1$.

Uniqueness follows immediately, since the vector spaces of polynomials of degree less than $2d-1$ and of pairs of polynomials of degrees less than $d-1$ and less than $d$ both have dimension $2d - 1$.
\end{proof}
We may now define the vertical reduction maps.
\begin{definition} \label{definevertred}
  For each $i \in \{0, \ldots, d-2 \}$, let $R_i, S_{i} \in \Z_{q}[x]$ be the unique polynomials of $\deg(R_i) < d-1, \deg(S_i) < d$ such that
    \begin{equation}\label{eqn:xiRiFSiFprime}
    x^i = R_i(x) F(x) + S_i(x) F'(x).
  \end{equation}
  Write $(rt - r + j) R_i(x) + r S_i'(x) = \gamma_{i,0} + \gamma_{i,1}x + \cdots + \gamma_{i,d-2}x^{d-2}$.
  Define $M_{V}^{j}(t)$ and $D_{V}^{j}(t)$ by
  \begin{equation}
    \begin{aligned}
      M_{V}^{j}(t) & \colonequals
      \begin{pmatrix}
        \gamma_{0,0} & \gamma_{1,0} & \cdots & \gamma_{d-2,0} \\
        \gamma_{0,1} & \gamma_{1,1} & \cdots & \gamma_{d-2,1} \\
        \vdots &  & \ddots & \vdots \\
        \gamma_{0,d-2} & \gamma_{1,d-2} & \cdots  & \gamma_{d-2,d-2}
      \end{pmatrix}, \\
      D_{V}^{j}(t) & \colonequals rt - r + j.
    \end{aligned}
  \end{equation}
  Further define
  \begin{equation}
  \begin{aligned}
    M_{V}^{j}(t_1,t_2) & \colonequals M_{V}^{j}(t_1 + 1)\cdot M_{V}^{j}(t_1 + 2) \cdots M_{V}^{j}(t_2),\\
    D_{V}^{j}(t_1,t_2) & \colonequals D_{V}^{j}(t_1 + 1)\cdot D_{V}^{j}(t_1 + 2) \cdots D_{V}^{j}(t_2).
  \end{aligned}
  \end{equation}
\end{definition}

\begin{lemma}
	Consider $M_{V}^{j}(t)$ as a linear map from $V^{j}_{t}$ to $V^{j}_{t-1}$ with respect to their standard bases. Then, for any $\omega \in V^{j}_{t}$,
	\begin{equation} \label{lem:vertreduction}
	D_{V}^{j}(t)	\omega \sim M_{V}^{j}(t)\omega
	\end{equation}
	in cohomology. More generally, considering $M_{V}^{j}(t_1,t_2)$ as a linear map from $V^{j}_{t_2}$ to $V^{j}_{t_1}$ with respect to their standard bases, for any $\omega \in V^{j}_{t_2}$,
	\begin{equation} \label{lem:multiplevertreduction}
	D_{V}^{j}(t_1,t_2) \omega \sim M_{V}^{j}(t_1,t_2)\omega.
	\end{equation}
\end{lemma}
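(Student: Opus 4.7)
The plan is to produce one explicit exact differential that relates forms with $y$-pole order $rt+j$ to forms with $y$-pole order $r(t-1)+j$, and then apply the decomposition of Lemma~\ref{vertredprereq} to each standard basis vector $x^i y^{-(rt+j)} \dd x$ of $V^j_t$.

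First I would compute, for any $S(x) \in \Z_q[x]$, the exact differential
\begin{equation*}
\dd\bigl( S(x) y^{-(r(t-1)+j)} \bigr) = S'(x) y^{-(r(t-1)+j)} \dd x - (r(t-1)+j)\, S(x) y^{-(r(t-1)+j)-1} \dd y.
\end{equation*}
Using the relation $r y^{r-1} \dd y = F'(x) \dd x$ obtained from $y^r = F(x)$, this becomes a cohomological identity
\begin{equation*}
0 \sim r S'(x)\, y^{-(r(t-1)+j)} \dd x \;-\; (r(t-1)+j)\, S(x) F'(x)\, y^{-(rt+j)} \dd x.
\end{equation*}
This is the only ``analytic'' input needed; everything else is linear algebra.

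Next, I would take an arbitrary basis element $\omega = x^i \, y^{-(rt+j)} \dd x$ of $V^j_t$ with $0 \le i \le d-2$ and use \eqref{eqn:xiRiFSiFprime} to write $x^i = R_i(x) F(x) + S_i(x) F'(x)$. Since $F(x) = y^r$, the first piece immediately drops the $y$-pole order: $R_i(x) F(x) y^{-(rt+j)} \dd x = R_i(x) y^{-(r(t-1)+j)} \dd x$. For the second piece, the exact differential above (applied with $S = S_i$) gives $S_i(x) F'(x) y^{-(rt+j)} \dd x \sim \frac{r}{r(t-1)+j} S_i'(x) y^{-(r(t-1)+j)} \dd x$. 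Clearing the denominator by multiplying by $D_V^j(t) = r(t-1)+j$ yields
\begin{equation*}
D_V^j(t)\, \omega \;\sim\; \bigl( (rt-r+j) R_i(x) + r S_i'(x) \bigr)\, y^{-(r(t-1)+j)} \dd x,
\end{equation*}
and by Definition~\ref{definevertred} the right-hand side is precisely the $i$-th column of $M_V^j(t)$, read as an element of $V^j_{t-1}$ in the standard monomial basis. Linearity in $\omega$ then gives \eqref{lem:vertreduction}.

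The routine bookkeeping — and the main (though minor) obstacle — is confirming the degree bounds: one must check $\deg R_i \le d-2$ and $\deg S_i' \le d-2$, so that the output genuinely lies in $V^j_{t-1}$ and the matrix $M_V^j(t)$ has the claimed size. Both follow directly from Lemma~\ref{vertredprereq}. Finally, the iterated statement \eqref{lem:multiplevertreduction} follows by telescoping: applying \eqref{lem:vertreduction} at pole levels $t_2, t_2-1, \ldots, t_1+1$ and composing the resulting matrix identities gives $D_V^j(t_1,t_2)\omega \sim M_V^j(t_1,t_2)\omega$ by induction on $t_2 - t_1$.
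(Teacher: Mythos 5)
Your proposal is correct and follows essentially the same route as the paper's proof: the same exact differential $\dd\bigl(S(x)\,y^{-(r(t-1)+j)}\bigr)$ (up to a scalar normalization), the same application of the decomposition $x^i = R_i F + S_i F'$ from Lemma~\ref{vertredprereq}, and the same telescoping for the iterated statement. The degree bookkeeping you flag ($\deg R_i \le d-2$, $\deg S_i' \le d-2$) is indeed what guarantees the output lands in $V^j_{t-1}$, and it follows from Lemma~\ref{vertredprereq} exactly as you say.
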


\begin{proof}
  For any $S(x) \in \Q_{q}[x]$,
  \begin{equation}
  \begin{aligned}
    0 & \sim  \dd\left( \frac{-r}{rt - r + j} S(x) y^{-(rt - r + j)} \right) \\
    & =   S(x)F'(x)y^{-(rt+j)} \dd x + \frac{-r}{rt-r+j}S'(x)y^{-(rt-r+j)} \dd x.
  \end{aligned}
\end{equation}
    So, writing $x^i = R_i(x)F(x) + S_i(x)F'(x)$ as in \eqref{eqn:xiRiFSiFprime}, we have
  \begin{equation*}
    \label{eqn:verticalrelation}
    \begin{aligned}
      x^i y^{-(rt + j)} \dd x   & =     R_i(x)F(x)y^{-(rt + j)} \dd x + S_i(x)F'(x)y^{-(rt + j)} \dd x \\
                                &  \sim    R_i(x)y^{-(rt -r + j)} \dd x + \frac{r}{rt-r+j}S_i'(x)y^{-(rt-r+j)}\dd x \\
                                &  =    \frac{(r(t-1) + j)R_i(x) + r S_i'(x)}{r(t-1) + j} y^{-(r(t-1) + j)} \dd x \\
                                & =    (D_{V}^{j}(t_1,t_2))^{-1} \left(\gamma_{i,0} + \gamma_{i,1}x + \cdots + \gamma_{i,d-2}x^{d-2}\right) y^{-(r(t-1) + j)} \dd x.
    \end{aligned}
  \end{equation*}
  From this, \eqref{lem:vertreduction} follows by linearity. Then \eqref{lem:multiplevertreduction} is immediate from \eqref{lem:vertreduction}.
\end{proof}

\begin{remark}
	  If we could work at infinite (or even very large) precision without it costing us
		computation time, this would be sufficient. However, in practice (and in
		theory), working with fewer extra bits results in significant time savings.
		Fortunately, we will see that when $p$ is sufficiently large, the valuations
		of the coefficients of $D_{V}^{j}(t_1,t_2)^{-1} M_{V}^{j}(t_1,t_2)$ are never less
		than $-1$. As a result, given any element of $V^j_{t}$, we will be able
		to compute a cohomologous element of $V^{j}_{0}$ while only losing a single digit of $p$-adic absolute precision.
\end{remark}

		Now, we follow Harvey's lead and study the coefficients of the matrices $M_{V}^{j}(t_1,t_2)$ and scalars $D_{V}^{j}(t_1,t_2)$.
		Lemma \ref{lem:almostintegralityvertical} will be our main technical tool.
		\begin{lemma} \label{lem:almostintegralityvertical}
		Suppose $A \in \Z_{q}[x]$ and  $B, G_{-t_2 + 1}, \dots, G_{-t_1} \in \Q_{q}[x]$ satisfy
		\begin{align} \label{eqn:almostintegralityvertical}
		A(x)y^{-r t_2 - j}\dd x = B(x) y^{-r t_1 - j}\dd x + \dd \left(\sum_{t = -t_2 + 1}^{-t_1} G_t(x) y^{r t - j} \right).
		\end{align}
		Fix $C \in \Z_{q}$. If $\displaystyle \frac{C}{rt_1 + j}, \frac{C}{r (t_1 + 1) + j}, \ldots, \frac{C}{r(t_2 - 1) + j} \in \Z_{q}$ then $C\cdot B(x) \in \Z_{q}[x]$.
		\end{lemma}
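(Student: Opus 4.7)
The plan is to match coefficients of the various powers of $y$ appearing in the identity \eqref{eqn:almostintegralityvertical}, extract a recursion for the $G_t$'s in terms of $A$ and $F'$, and then use the assumption $p > dr$ (which makes the leading coefficient of $F'$ a unit in $\Z_q$) to propagate integrality bounds.

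First, I would compute
\[
d\bigl(G_s(x) y^{rs-j}\bigr) = G_s'(x) y^{rs-j}\, dx + \tfrac{rs-j}{r} G_s(x) F'(x) y^{rs-j-r}\, dx,
\]
using $r y^{r-1}\,dy = F'(x)\,dx$. Matching coefficients of $y^{rt-j}\,dx$ on both sides of \eqref{eqn:almostintegralityvertical} for each $t \in \{-t_2, -t_2+1, \ldots, -t_1\}$, and writing $H_s := G_{-s}$ and $\lambda_u := ru + j$, yields three types of identities: (top, from $t=-t_2$) $A = -\tfrac{\lambda_{t_2-1}}{r} H_{t_2-1}(x) F'(x)$; (middle, from $-t_2 < t < -t_1$) $\lambda_{s-1}\, H_{s-1}(x)\, F'(x) = r H_s'(x)$ for $t_1 < s < t_2$; and (bottom, from $t=-t_1$) $B(x) = H_{t_1}'(x)$. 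These pin down $B$ in terms of $A$ through iterated division by $F'$ and differentiation.

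Next I would exploit the fact that $p > dr$, so neither $d$ nor $f_d$ is divisible by $p$ and the leading coefficient $d f_d$ of $F'$ is a unit in $\Z_q$. Consequently, for any $P \in \Z_q[x]$ that is divisible by $F'$ in $\Q_q[x]$, polynomial long division with a unit leading coefficient keeps the quotient in $\Z_q[x]$. Setting $\Lambda_s := \prod_{u=s}^{t_2-1} \lambda_u$, I would prove by descending induction on $s$ from $t_2-1$ down to $t_1$ that $\Lambda_s H_s(x) \in \Z_q[x]$. The base case is immediate from $\lambda_{t_2-1} H_{t_2-1} F' = -rA \in \Z_q[x]$ followed by division by $F'$. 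For the inductive step, multiplying the middle identity by $\Lambda_{s-1}/\lambda_{s-1} = \Lambda_s$ gives $\Lambda_{s-1} H_{s-1} F' = r\bigl(\Lambda_s H_s\bigr)' \in \Z_q[x]$ by the inductive hypothesis, and dividing by $F'$ yields $\Lambda_{s-1} H_{s-1} \in \Z_q[x]$. At $s = t_1$ we obtain $\Lambda_{t_1}\, B = \bigl(\Lambda_{t_1} H_{t_1}\bigr)' \in \Z_q[x]$.

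The step that will require the most care is passing from the integrality of $\Lambda_{t_1} B$ to the precise conclusion $CB \in \Z_q[x]$ under the hypothesis $v_p(C) \ge v_p(\lambda_u)$ for each individual $u \in [t_1, t_2-1]$, which is strictly weaker than $v_p(C) \ge \sum_u v_p(\lambda_u) = v_p(\Lambda_{t_1})$ whenever two or more $\lambda_u$'s in the range are divisible by $p$ (possible only when $t_2 - t_1 \ge p$, since $p \nmid r$). To close the gap, I would show that the iterated operator applied to $A$, namely $(d/dx \circ (\,\cdot\,/F'))^{t_2-t_1}(A)$, automatically picks up an extra $p$-adic factor of $\Lambda_{t_1}/\max_u \lambda_u$ from arithmetic cancellations among the consecutive $\lambda_u$'s in the recursion, analogous to the way a product of an arithmetic progression of length $\ge p$ carries more factors of $p$ than any single term. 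Combined with the propagated integrality above and the hypothesis on $C$, this yields $CB \in \Z_q[x]$.
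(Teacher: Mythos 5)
There are two genuine gaps here, and the second is the heart of the lemma. First, your coefficient-matching is not set up correctly: the ring $\Q_q[x,y,y^{-1}]/(y^r-F)$ is free over $\Q_q[y,y^{-1}]$ with basis $1,x,\dots,x^{d-1}$, so ``the coefficient of $y^{rt-j}\,\dd x$'' is only well-defined after reducing all $x$-degrees below $d$ using $x^d \leftrightarrow f_d^{-1}(y^r - P(x))$. The product $G_s(x)F'(x)$ has degree up to $\deg G_s + d - 1 \geq d$, so writing $G_sF' = QF + R$ with $\deg R < d$ shows that the term $\frac{rs-j}{r}G_sF'y^{rs-j-r}\,\dd x$ contributes to \emph{both} levels $y^{rs-j}$ and $y^{rs-j-r}$. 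Your ``top'' identity $A = -\frac{\lambda_{t_2-1}}{r}H_{t_2-1}F'$ would force $F' \mid A$ in $\Q_q[x]$, which is false in general (e.g.\ for $A=1$, the case actually used). The correct bookkeeping is exactly the decomposition $x^i = R_iF + S_iF'$ of Lemma \ref{vertredprereq}, and the resulting recursion is not a clean iteration of $\frac{\dd}{\dd x}\circ(\,\cdot\,/F')$.

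Second, even granting your recursion, the final step is asserted rather than proved, and it is precisely the nontrivial content of the statement: you correctly observe that naive propagation only yields $\Lambda_{t_1}B \in \Z_q[x]$ with $\Lambda_{t_1} = \prod_u \lambda_u$, whereas the hypothesis only controls $C/\lambda_u$ for each $u$ separately, and these differ as soon as two of the $\lambda_u$ are divisible by $p$ (i.e.\ $t_2 - t_1 \geq p$, which does occur in the algorithm). The claimed ``arithmetic cancellation'' in the iterated operator is exactly what needs to be shown, and there is no evident reason the polynomial recursion produces it. The paper's proof sidesteps the accumulation entirely by a different mechanism: it expands everything as Laurent series in the uniformizer $y$ at each branch point $(\theta_i,0)$, where $x$ is a power series in $y^r$ and antidifferentiation divides each coefficient $a_{i,t}y^{rt-j-1}\,\dd y$ by the \emph{single} factor $rt-j$ --- no iteration, hence no product of denominators --- and then transfers integrality of the local expansions back to integrality of the polynomials $G_t$ and $B$ via Lemma \ref{lem:integralitytransfers}. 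If you want to salvage a purely polynomial argument, you would essentially have to reprove that transfer principle; as written, the proposal does not establish the lemma.
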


		\begin{remark}
		In our setting, $rt_1 + j \leq rt_2 + j < p^2$, so we may take $C = p$. Applying Lemma \ref{lem:almostintegralityvertical} with $A(x) = 1, x, \dots, x^{d-1}$, the coefficients of $p D_{V}^{j}(t_1,t_2)^{-1} M_{V}^{j}(t_1, t_2)$ all belong to $\Z_q$.
		\end{remark}

		We defer the proof of Lemma \ref{lem:almostintegralityvertical} to the end of the section, and collect the consequences needed for our main algorithm.

\begin{lemma} \label{lem:invertiblematrix}
If $r(t-1) \equiv -j\pmod{p}$, then $M_{V}^{j}(t)^{-1}$ is integral.
\end{lemma}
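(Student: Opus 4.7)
The plan is to show that $M_V^j(t)$ is invertible already modulo $p$; then its determinant is a unit in $\Z_q$ and $M_V^j(t)^{-1}$ is automatically integral. Under the hypothesis $r(t-1) \equiv -j \pmod{p}$, i.e., $D_V^j(t) \equiv 0 \pmod{p}$, the recipe in Definition \ref{definevertred} gives that the $(i+1)$-th column of $M_V^j(t) \bmod p$ is precisely the coefficient vector of $r\,\overline{S}_i'(x)$, where $\overline S_i \in \F_q[x]$ denotes the reduction of $S_i$ mod $p$. Since $p \nmid r$, it will suffice to prove that $\overline S_0', \overline S_1', \ldots, \overline S_{d-2}'$ are linearly independent in $\F_q[x]$.

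Because $p > d$, the derivative map on $\F_q[x]_{<d}$ has kernel exactly $\F_q$, so linear independence of the derivatives is equivalent to linear independence of $\overline S_0, \ldots, \overline S_{d-2}$ modulo the constants. The key step will be to prove this by contradiction: suppose $\sum_i c_i \overline S_i = c$ in $\F_q[x]$ for some $c_i, c \in \F_q$ with not all the $c_i$ zero. By the uniqueness in Lemma \ref{vertredprereq} applied over $\F_q$ (available because $\overline F$ is squarefree), the pair $(\overline R_i, \overline S_i)$ is the unique solution of $x^i = \overline R_i \overline F + \overline S_i \overline F'$ in $\F_q[x]$ under the degree bounds. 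Summing this relation weighted by $c_i$ and substituting $\sum c_i \overline S_i = c$ yields
\begin{equation*}
\sum_{i=0}^{d-2} c_i x^i = \Bigl(\sum_{i=0}^{d-2} c_i \overline R_i\Bigr) \overline F + c\,\overline F'.
\end{equation*}

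A degree chase then finishes the argument. The left-hand side has degree at most $d-2$. Since $\deg \overline F = d$ (as $p \nmid f_d$) and each $\overline R_i$ has degree strictly less than $d-1$, any nonzero choice of $\sum c_i \overline R_i$ would make $(\sum c_i \overline R_i)\overline F$ have degree at least $d$, which is too large; this forces $\sum c_i \overline R_i = 0$. The remaining identity $\sum c_i x^i = c\,\overline F'$ has right-hand side of degree $d-1$ whenever $c \neq 0$ (because $p \nmid d$ ensures the leading term $d f_d x^{d-1}$ of $\overline F'$ is nonzero), so we conclude $c=0$ and then all $c_i = 0$, the desired contradiction.

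The main obstacle is arranging this degree comparison cleanly, and in particular keeping careful track that $\overline F$ does not drop degree modulo $p$ and that $\overline F'$ retains its top coefficient; both follow from the standing assumption \eqref{eqn:assumption} that $p > d(N+\epsilon)r$. Once the claimed linear independence is in hand, $\det(M_V^j(t)) \equiv r^{d-1} \det[\overline S_0' \mid \cdots \mid \overline S_{d-2}'] \not\equiv 0 \pmod{p}$, so $\det(M_V^j(t)) \in \Z_q^\times$ and consequently $M_V^j(t)^{-1}$ has entries in $\Z_q$, as required.
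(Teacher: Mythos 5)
Your proof is correct and is precisely the argument the paper invokes: the paper's own proof is a one-line reference to Harvey's Lemma 7.7 (with $2g$ replaced by $d-1$), and that proof is exactly your reduction of $M_V^j(t)$ modulo $p$ to the matrix of the $r\overline{S}_i'$ followed by the degree comparison in $\sum_i c_i x^i = \bigl(\sum_i c_i \overline{R}_i\bigr)\overline{F} + c\,\overline{F}'$. You have simply written out in full, in the cyclic-cover setting, the argument the paper cites.
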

		The proof is identical to the proof of Harvey's \cite[Lemma 7.7]{harvey-07} after replacing each occurrence of $2g$ with $d-1$.
		Indeed, the matrices are the same, up to multiplication by a unit.

\begin{lemma} \label{lem:verticalmatrixzero}
If $rt_1 \equiv -j \mod p$, then $M_{V}^{j}(t_1, t_1+p)$ is zero modulo $p$.
\end{lemma}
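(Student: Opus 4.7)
The plan is to follow the template of Harvey's analogous lemma for horizontal reductions (see \cite[\S 7.2.2]{harvey-07}): combine the cohomology identity $M_V^j(t_1, t_1+p)\, v \sim D_V^j(t_1, t_1+p)\, v$ with the integrality provided by Lemma~\ref{lem:almostintegralityvertical}.

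The first task is to compute the $p$-adic valuations of the denominators in the product. Under the hypothesis $r t_1 \equiv -j \pmod p$, the first factor $D_V^j(t_1+1) = r t_1 + j$ is divisible by $p$; the condition $p > d(N+\epsilon)r$, together with the range of $t_1$ arising from the Frobenius expansion, ensures $r t_1 + j < p^2$ (as in the remark after Lemma~\ref{lem:almostintegralityvertical}), so $v_p(rt_1+j) = 1$ exactly. For $k = 1, \dots, p-1$, the denominator $D_V^j(t_1+1+k) = r t_1 + k r + j \equiv k r \pmod p$ is a $p$-adic unit since $\gcd(r, p) = 1$; by Fermat's little theorem, $D_V^j(t_1+1, t_1+p) \equiv r^{p-1} (p-1)! \equiv -1 \pmod p$, so it is also a unit. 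Hence $D_V^j(t_1, t_1+p) = (r t_1 + j) \cdot D_V^j(t_1+1, t_1+p)$ has $p$-adic valuation exactly $1$.

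Next I would factor $M_V^j(t_1, t_1+p) = M_V^j(t_1+1) \cdot T$ with $T := M_V^j(t_1+1, t_1+p)$. Because every denominator in the tail is a unit, the remark following Lemma~\ref{lem:almostintegralityvertical} applied with $C = 1$ shows that $T$ is integral, and hence $M_V^j(t_1, t_1+p)$ is integral as a product of integral matrices. To upgrade integrality to divisibility by $p$, the strategy is to apply Lemma~\ref{lem:almostintegralityvertical} with $A(x) = D_V^j(t_1, t_1+p) \cdot x^i$ for each column $i \in \{0, \dots, d-2\}$. Since $A$ itself already lies in $p \cdot \Z_q[x]$, the single bad denominator of valuation $1$ is exactly compensated as one reduces through all $p$ consecutive vertical steps, and the column $M_V^j(t_1, t_1+p)\, e_i$ is forced to lie in $p \cdot V^j_{t_1}$; doing this for each $i$ gives the desired matrix congruence.

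The main obstacle I anticipate is making this compensation precise. The statement of Lemma~\ref{lem:almostintegralityvertical} as given yields only integrality of $C \cdot B$ for $C = p$, so passing to divisibility of $B$ by $p$ requires tracking the valuation of the input $A$ through all $p$ consecutive reduction steps---essentially showing that if $v_p(A) \geq 1$, then $v_p(B) \geq 1$ as well. This could either be formalized as a refinement of Lemma~\ref{lem:almostintegralityvertical} that incorporates $v_p(A)$ as an additional parameter, or verified directly by analyzing the product $M_V^j(t_1, t_1+p) \bmod p$ using the affine dependence $M_V^j(t) = u \cdot \mathcal{R} + r \cdot \mathcal{S}'$ with $u = r(t-1)+j$, where the $p$ factors of the form $u_k \mathcal{R} + r \mathcal{S}'$ with $u_k \equiv kr \pmod p$ collectively annihilate every integral vector modulo $p$.
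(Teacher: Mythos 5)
Your valuation computations are correct (the only non-unit among $D_V^j(t_1+1),\dots,D_V^j(t_1+p)$ is the first one, $rt_1+j$, with valuation exactly $1$), and your integrality argument does establish that $M_{V}^{j}(t_1, t_1+p)$ is integral. But there is a genuine gap in the step from integrality to divisibility by $p$. The ``compensation'' you describe does not deliver what you claim: writing $A = D_V^j(t_1,t_1+p)\,x^i = p w x^i$ with $w$ a unit, Lemma~\ref{lem:almostintegralityvertical} with $C=p$ applied to $x^i$ gives only that $p\,D_V^j(t_1,t_1+p)^{-1}M_V^j(t_1,t_1+p)\,e_i$ is integral; multiplying by the extra factor of $p$ in $A$ then yields that $M_V^j(t_1,t_1+p)\,e_i$ is integral, i.e.\ valuation $\geq 0$, not $\geq 1$. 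The single bad denominator exactly eats the single factor of $p$ you put into $A$. Your proposed refinement ``$v_p(A)\geq 1 \Rightarrow v_p(B)\geq 1$'' is, over this range of $t$, equivalent to the statement being proved, so it cannot be assumed; and your fallback claim that the $p$ factors $m\mathcal{R}+\mathcal{S}'$ ($m=0,\dots,p-1$) ``collectively annihilate every integral vector modulo $p$'' is precisely the content of the lemma and is not obvious by direct inspection.

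The paper closes this gap by extending the product one extra step, to $M_V^j(t_1, t_1+p+1)$. The key point is that \emph{both} endpoints then contribute a factor of $p$: $D_V^j(t_1+1)=rt_1+j$ and $D_V^j(t_1+p+1)=r(t_1+p)+j$ are each divisible by $p$ exactly once (by the size bound as in Lemma~\ref{lemma:nop2denominator}), so $D_V^j(t_1,t_1+p+1)=p^2u$ with $u$ a unit. Lemma~\ref{lem:almostintegralityvertical} with $C=p$ makes $X=p\,D_V^j(t_1,t_1+p+1)^{-1}M_V^j(t_1,t_1+p+1)$ integral, whence $M_V^j(t_1,t_1+p+1)=puX\equiv 0\pmod p$. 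One then peels off the extra factor via $M_V^j(t_1,t_1+p)=M_V^j(t_1,t_1+p+1)\,M_V^j(t_1+p+1)^{-1}$, and Lemma~\ref{lem:invertiblematrix} (which you never invoke, a sign of the missing idea) guarantees that this inverse is integral, so divisibility by $p$ survives. Without the extension to $t_1+p+1$ there is only one factor of $p$ available, and your argument cannot get past integrality.
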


\begin{proof}
  Here, the proof generalizes \cite[Lemma 7.9]{harvey-07}.
  By Lemma \ref{lem:almostintegralityvertical},
  \begin{equation}
  X \colonequals p D_{V}^{j}(t_1, t_1 + p + 1)^{-1} M_{V}^{j}(t_1, t_1+p +1) 
  \end{equation}
   has integral coefficients. By a computation similar to Lemma \ref{lemma:nop2denominator}, $D_{V}^{j}(t_1, t_1 + p + 1) = p^2\cdot u$ for some unit $u \in \Z_q^\times$, since the first and last terms contribute exactly one power of $p$ and no other terms contribute. Then,
   \begin{equation*}
   M_{V}^{j}(t_1, t_1+p) = p^{-1} D_{V}^{j}(t_1, t_1 + p + 1) X M_{V}^{j}(t_1 + p + 1)^{-1} = p uX M_{V}^{j}(t_1 + p + 1)^{-1}.
   \end{equation*}
   Lemma~\ref{lem:invertiblematrix} implies $M_{V}^{j}(t_1 + p + 1)^{-1}$ is integral, so $M_{V}^{j}(t_1, t_1+p) \equiv 0 \mod{p}$.
\end{proof}

Lemma \ref{lem:verticalmatrixzero} implies that the matrix $Y := D_V^j(t_1,t_1 + p)^{-1}M_V^j(t_1, t_1 + p)$ is integral when $rt_1 \equiv -j \mod p$. Hence the denominators of ``vertically reductions'' of differentials do not grow, at least if we reduce in appropriate batches of $p$ steps.

Unfortunately, we may not start with $t_1$ satisfying $rt_1 \equiv -j \mod p$. Reducing to this case involves dividing by $p$ at most once. To compensate, we must compute $Y$ to one extra digit of $p$-adic precision.

		Having collected our results, we now prove Lemma \ref{lem:almostintegralityvertical}. Much like Kedlaya's proof of \cite[Lemma 2]{kedlaya-01}, we compare power series expansions of differentials in the uniformizer $y$ near $(\theta_i,0)$ for all roots $\theta_i$ of $F$. We give a full proof for clarity. The argument relies heavily on the following lemma:

		\begin{lemma} \label{lem:integralitytransfers}
		Let $G \in \Q_{q}[x]$ be a polynomial with $\deg(G) < d$. View $G$ as an
		element of $\Q_{q}[x,y]/(y^r - F(x))$. Let $\theta_1, \dots, \theta_d$ be
		the roots of $F$. Let $K_i \cong \Q_{q}{(\!(y)\!)}$ be the fraction field of the
		completion of the local ring at $(\theta_i,0)$. The following are
		equivalent:
		\begin{enumerate}[label=(\roman*)]
		\item $G$ has integral coefficients as a polynomial.
		\item $G$ has integral coefficients as a power series in $K_i$ for all $i$.
		\item The coefficient of $y^0$ of $G$ as a power series in $K_i$ is integral for all $i$.
		\end{enumerate}
		\end{lemma}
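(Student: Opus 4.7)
The equivalence breaks into three implications. The direction (ii) $\Rightarrow$ (iii) is immediate from the definition of integrality of a power series, so the work lies in proving (i) $\Rightarrow$ (ii) and (iii) $\Rightarrow$ (i). Both rest on a single observation: since $\overline{F}$ is squarefree with unit leading coefficient $f_d$, the residues $\overline{\theta}_1, \ldots, \overline{\theta}_d \in \overline{\F_q}$ are pairwise distinct, so $\theta_i - \theta_j$ is a unit in $\overline{\Z_q}$ whenever $i \neq j$, and $F'(\theta_i) = f_d \prod_{j \neq i}(\theta_i - \theta_j)$ is a unit in $\calO_{L_i}$, where $L_i = \Q_q(\theta_i)$.

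For (i) $\Rightarrow$ (ii), the plan is to expand $x$ as a power series in the uniformizer $y$ at $(\theta_i, 0)$ and substitute into $G$. Writing $y^r = f_d(x - \theta_i) \prod_{j \neq i}(x - \theta_j)$, the second factor takes the unit value $F'(\theta_i)/f_d \cdot f_d$ at $x = \theta_i$ by the observation above, so Hensel's lemma produces
\begin{equation*}
x = \theta_i + y^r \cdot v(y), \qquad v(y) \in \calO_{L_i}[[y^r]] \text{ with } v(0) \in \calO_{L_i}^\times.
\end{equation*}
Substituting this expansion into $G \in \Z_q[x] \subseteq \calO_{L_i}[x]$ yields a power series in $y$ whose coefficients lie in $\calO_{L_i}$, which is (ii).

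For (iii) $\Rightarrow$ (i) --- the main content --- note that the $y^0$-coefficient of $G$ as a power series in $K_i$ is simply $G(\theta_i)$, so the hypothesis reads $G(\theta_i) \in \calO_{L_i}$ for each $i$. Writing $G = \sum_{\ell = 0}^{d - 1} g_\ell x^\ell$ with $g_\ell \in \Q_q$, I would invert the Vandermonde system: the matrix $V = (\theta_i^\ell)_{i, \ell}$ satisfies $V \cdot (g_\ell)_\ell = (G(\theta_i))_i$ and has determinant $\pm \prod_{i < j}(\theta_j - \theta_i)$, which by the same observation is a unit in $\overline{\Z_q}$. Hence $V^{-1}$ has entries in $\overline{\Z_q}$, forcing each $g_\ell \in \overline{\Z_q}$. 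Combined with $g_\ell \in \Q_q$ and $\Q_q \cap \overline{\Z_q} = \Z_q$, we conclude $g_\ell \in \Z_q$.

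The only real obstacle is bookkeeping around which extension each $\theta_i$ lives in when $F$ does not split over $\Q_q$; the cleanest way around this is to base change to $\overline{\Q_q}$ for the Vandermonde step and to work over a single $L_i$ at a time for the Hensel step. Conceptually, the lemma is the statement that $\Z_q[x]/F(x)$ is étale over $\Z_q$ (a product of unramified extensions of $\Z_q$), so evaluation at the roots is an isomorphism $\Z_q[x]/F(x) \xrightarrow{\sim} \prod_i \calO_{L_i}$; all the concrete work above is spent verifying this isomorphism through explicit Vandermonde and Hensel calculations that use only the hypothesis that $\overline{F}$ is squarefree.
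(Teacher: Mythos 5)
Your proposal is correct and follows essentially the same route as the paper: the paper's ``(iii) $\Rightarrow$ (i)'' step is Lagrange interpolation, which is exactly your Vandermonde inversion (both hinging on $\prod_{i<j}(\theta_i-\theta_j)$ being a unit because $\overline{F}$ is squarefree), and the paper's ``(i) $\Rightarrow$ (ii)'' step is your Hensel expansion of $x$ in the uniformizer $y$, phrased as ``never requires division by a non-unit.'' Your write-up just spells out the details the paper leaves implicit.
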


		\begin{proof}
	  It is trivial that (ii) implies (iii). \\
	  ``(iii) implies (i)'' follows immediately from the observation that the coefficient of $y^0$ of $G$ as a power series in $K_i$ is equal to $G(\theta_i)$. Since $\deg(G) < d$ and the roots of $F$ are distinct mod $p$, the Lagrange interpolation formula shows that $G \in \Z_{q}[x]$.\\
	  ``(i) implies (ii)'' follows immediately from the fact that $F$ has distinct roots mod $p$, so expanding $x$ as a power series in $y$ in $K_i$ never requires division by a non-unit.
		\end{proof}

		With Lemma \ref{lem:integralitytransfers}, the proof of Lemma
		\ref{lem:almostintegralityvertical} follows from the observation that the
		map $\dd$ commutes with passage to the local ring.

    \begin{proof}[Proof of Lemma \ref{lem:almostintegralityvertical}.]
      Note that for all roots $\theta_i$ of $F$, $F'(\theta_i) \in
      \Z_{q}^{\times}$, since $\overline{F}$ is separable. Then, as power series
      in $y$ (near $(\theta_i,0)$),
      \begin{equation*}
        \begin{aligned}
          A(x)y^{r (-t_2) - j}\dd x  = r A(x) y^{r (-t_2 + 1) - j - 1} F'(x)^{-1} \dd y 
          & = \sum_{t = -t_2 + 1}^{\infty} a_{i,t} y^{r t - j - 1} \dd y, \\
          B(x)y^{r (-t_1) - j}\dd x & = \sum_{t = -t_1 + 1}^{\infty} b_{i,t} y^{r t - j - 1} \dd y,
        \end{aligned}
      \end{equation*}
      where the $a_{i,t}$ are integral by Lemma \ref{lem:integralitytransfers}, but we have no bounds (yet) on the $b_{i,t}$. Then,
      \begin{equation*}
        \dd \left(\sum_{t = -t_2 + 1}^{-t_1} G_t(x) y^{r t - j} \right) = \sum_{t = -t_2 + 1}^{-t_1 } a_{i,t} y^{r t - j - 1} \dd y + \sum_{t = -t_1 + 1}^{\infty} (a_{i,t} - b_{i,t}) y^{r t - j - 1} \dd y.
      \end{equation*}
      Integrating term by term,
      \begin{equation}
          \sum_{t = -t_2 + 1}^{-t_1} G_t(x) y^{r t - j}  = \sum_{t = -t_2 +1}^{-t_1} \frac{a_{i,t}}{r t - j} y^{r t - j} + \sum_{t = -t_1+1}^{\infty} \frac{a_{i,t} - b_{i,t}}{r t - j} y^{r t - j},
      \end{equation}
      In particular, if $C$ satisfies
      $\frac{C}{r\cdot t + r - j} \in \Z_{q}$,  for all $t \in \{-t_2, \dots, -t_1 - 1\},$
      then the coefficients of
      $y^{r (-t_2 + 1) - j}, y^{r (-t_2 + 2) - j}, \ldots, y^{r (-t_{1} - 1) - j},
      y^{r (-t_1) - j}
      $
      in all of the power series expansions at points $(\theta_i,0)$ of
      $
      \sum_{t = -t_2 + 1}^{-t_1} C\cdot G_t(x) y^{r t - j}
      $
      are integral.

		In particular, $C\cdot G_{-t_2 + 1}$ satisfies (iii) of Lemma
		\ref{lem:integralitytransfers}. Then the series expansions of $C\cdot
		G_{-t_2+1}(x)$ are all integral by condition (ii). Subtracting off $C\cdot G_{-t_2 +1}$, we see $C\cdot G_{-t_2 + 2}$ satisfies (iii) of Lemma \ref{lem:integralitytransfers}, hence condition (ii) and so on, so that all of the coefficients in all of the expansions of $\sum_{t = -t_2 + 1}^{-t_1} G_t(x) y^{r t - j}$ are integral. They remain integral upon differentiating.

		Rearranging \eqref{eqn:almostintegralityvertical}, the expansions of $C\cdot
		B(x) y^{-r t_1 + j}\dd x$ at each $(\theta_{i}, 0)$ as Laurent series in
		$\Q_{q}(\!(y)\!) \dd y$ are integral. Replacing $\dd y$ with $F'(x)y^{1-r}/r
		\dd x$ preserves integrality. A final application of Lemma
		\ref{lem:integralitytransfers} shows that $C\cdot B(x)$ is integral.
		\end{proof}

\section{Main algorithm} \label{sec:algorithm}

We now combine the techniques of the previous sections to compute the matrix
representing the $p$-th power Frobenius action with respect to
$\langle B_{\epsilon} \rangle \subset H^1 _\MW (\tildecalC)$  modulo $p^N$.
We summarize the procedure in Algorithm~\ref{algorithm}, where we take all intervals to be discrete, i.e., intersected with $\Z$.

\begin{algorithm}[htbp]
\DontPrintSemicolon
\SetAlgoVlined

  \SetCommentSty{textsc}
  \For{ $k \in [0, N - 1]$, $i \in [0, d - 2]$, $j \in [1 + \epsilon r , (1 + \epsilon)r-1]$, $\ell \in [0, d k + i + 1]$}
  {
      $T_{(i,j), k, \ell} \gets \mu_{j, k,  \ell - i - 1} x^{p \ell - 1} y^{- p (k r + j)}$; \tcp*[f]{ See Lemma~\ref{lemma:sparsefrob}}
  }
  \tcp{Horizontal reductions}
  \For{ $k \in [0, N - 1],\, j \in [1 + \epsilon r , (1 + \epsilon)r-1]$ }
  {
    $t \gets p(k r + j)$\;
    $L \gets \min(N - 1, d k + d - 2)$\;

    \tcp{Horizontal reductions modulo $p^N$, by linear recurrences}

    \For{ $\ell \in [0, L]$ }
    {
      $D(\ell), M(\ell) \gets D_H(p\ell, p(\ell + 1) - d - 1), M_H (p\ell, p(\ell + 1) - d - 1)$;
    }

    \tcp{Deduce the remaining $M(\ell)$ modulo $p^N$, by interpolation}

    \For{ $\ell \in [L + 1, d k + d - 2]$}
    {
      $D(\ell), M(\ell) \gets D_H(p\ell, p(\ell + 1) - d - 1), M_H (p\ell, p(\ell + 1) - d - 1)$
    }

    \tcp{Reduce $T_{(i,j), k}$ horizontally}

    \For{$i \in [0, d - 2]$}
    {
      $v \gets T_{(i,j),k, d k + i + 1}$;
      \tcp*{$v \in W_{p(d k + i + 1) - 1, t}$}

      \For{ $\ell = d k + i$ \KwTo $0$}
      {
        \For(\tcp*[f]{$W_{p(\ell + 1) -1, t} \rightarrow W_{p \ell - 1, t}$})
        {
          $e \in [1, d]$
        }
        {
          $v \gets D_H ^t (p (\ell + 1) - e)^{-1} \left( M_H ^t (p (\ell + 1) - e) \cdot v\right)$;
        }
        $v \gets T_{(i,j), k, l} + \bigl( D_H ^t (p \ell) ^{-1} M_H ^t (p \ell) \bigr)  \cdot \bigl( D(\ell)^{-1} M(\ell) \bigr) \cdot v$
      }
      $w_{(i,j), k} \gets v$; \tcp*{$w_{(i,j),k} \in W_{-1,t}$}
    }
  }
  \tcp{Vertical reductions}
  \For{ $j \in [1 + \epsilon r , (1 + \epsilon)r-1]$ }
  {
    \tcp{$p(kr + j) = r ( p k + \alpha) + \beta = p r (k + \lambda) + r \gamma + r \epsilon + \beta$}
    $\alpha, \beta \gets \lfloor pj / r \rfloor, pj \bmod r$
    \;
    $\lambda, \gamma \gets \lfloor (\alpha - \epsilon)/p \rfloor, (\alpha - \epsilon) \bmod r$
    \;
    $\delta \gets \gamma + \epsilon$
    \;
    \tcp{Vertical reductions modulo $p^{N+1}$, by linear recurrences}
    $M(0) \gets D_V ^\beta(\epsilon, \delta)^{-1} M_V ^\beta(\epsilon, \delta)$
    \;
    \For{$\ell \in [1, \lambda + N - 1]$}
    {
      $M(\ell) \gets D_V ^\beta (\delta + p (\ell - 1), \delta + p \ell) ^{-1} M_V ^\beta (\delta + p (\ell - 1), \delta + p \ell)$;
    }
    \For{$i \in [0, d - 2]$}
    {
      $v \gets w_{(i,j), N - 1 + \lambda}$ \tcp*{$v \in  V^{\beta} _{p (N - 1 + \lambda) +  \delta}$}
      \For(\tcp*[f]{$V ^\beta _{p k   + \delta } \rightarrow V ^\beta _{p (k - 1) + \delta }$}){
      $k = N - 1 + \lambda$ \KwTo $1$
      }{
        \eIf{$k \geq \lambda$}
        {
          $v \gets w_{(i,j), k - \lambda} + M(k) v$
          }{
          $v \gets M(k) v$
        }
      }
      $w_{(i,j)} \gets M(0) \cdot v$;
    }
  }
  \nl\KwRet{$w_{(i,j)}$, $i \in [0, d - 2]$, $j \in [1 + \epsilon r , (1 + \epsilon)r-1]$}
  \caption{Computes the matrix representing the $p$-th power Frobenius action with respect to
  $\langle B_{\epsilon} \rangle \subset H^1 _\MW (\tildecalC)$  modulo $p^N$}
  \label{algorithm}
\end{algorithm}

We now analyze the time and space complexity of Algorithm~\ref{algorithm}.
First, we recall that all our underlying ring operations are done in $\Z_q /
p^N$ or $\Z_q / p^{N+1}$.  Using bitstrings of length $O(N n \log p)$ to
represent elements of these rings, the basic ring operations (addition,
multiplication, and inversion) have bit complexity $\Otilde(N n \log p)$, the
matrix arithmetic operations on matrices of size $m \times m$  have bit
complexity $\Otilde(m^\omega N n \log p)$, and polynomial multiplication of
polynomials of degree $m$ has bit complexity $\Otilde(m N n \log p)$.
Applying Frobenius to such an element has complexity \(\Otilde(n\log^2 p + nN \log p)\) \cite[Corollary 3]{hubrechts-10}.

For $p$ sufficiently large, the dominant steps are the horizontal and vertical
reductions, i.e. lines $7$ and $23$ in Algorithm~\ref{algorithm}. In either
case, we apply a modification of \cite[Theorem 15]{bostan-gaudry-schost-07} to
achieve the $p^{1/2 + o(1)}$ time dependence.
\begin{proposition}[{Linear recurrences method, \cite[Theorem 6.1]{harvey-07}}]
  \label{prop:bsgs}
  Let $R = \Z_q/p^N$ or $\Z_q/p^{N+1}$, and $M(x) \colonequals M_0 + x M_1 \in R[x]^{m\times m}$.
  Let $0 \leq \alpha_1 < \beta_1 \leq \alpha_2 < \beta_2 \leq \dots \leq \alpha_h < \beta_h \leq K$ be integers.
  Assume $h < \sqrt{K} < p - 1$ and
  write $M(\alpha, \beta) \colonequals M(\alpha + 1) \cdots M(\beta)$.
  Then $M(\alpha_i, \beta_i)$ for $i = 1, \dots, h$ can be computed using $\Otilde(m^\omega \sqrt{K})$ ring operations in space $O(m^2 \sqrt{K})$.
\end{proposition}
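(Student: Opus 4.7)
The plan is to apply the baby-step/giant-step technique of Bostan--Gaudry--Schost \cite[Theorem 15]{bostan-gaudry-schost-07}, appropriately extended to handle multiple disjoint intervals. First I would choose a step size $s = \lceil \sqrt{K} \rceil$ and build the giant-step polynomial
\[
  P(x) \colonequals M(x + 1) M(x + 2) \cdots M(x + s) \in R[x]^{m \times m},
\]
which has degree $s$ in $x$. Constructing $P$ by a subproduct tree whose internal nodes multiply matrix-valued polynomials via FFT-based polynomial arithmetic on each of the $m^2$ entries would take $\Otilde(m^\omega s) = \Otilde(m^\omega \sqrt{K})$ ring operations and $O(m^2 s)$ space.

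Next I would decompose each target range as $\beta_i - \alpha_i = q_i s + r_i$ with $0 \le r_i < s$, so that
\[
  M(\alpha_i, \beta_i) = P(\alpha_i) \cdot P(\alpha_i + s) \cdots P(\alpha_i + (q_i - 1) s) \cdot M(\alpha_i + q_i s, \beta_i).
\]
Because the intervals are disjoint and lie in $[0, K]$, the total number of giant-step evaluations summed over $i$ is $\sum_i q_i \le K / s = O(\sqrt{K})$. The assumption $\sqrt{K} < p - 1$ guarantees that $R$ contains enough distinct elements for fast multi-point evaluation; applied entry-wise to the $m^2$ scalar coefficient polynomials of $P$ at these $O(\sqrt{K})$ arguments, this costs $\Otilde(m^2 \sqrt{K})$ ring operations. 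Multiplying the resulting matrices in the appropriate consecutive groups, one group per interval, then uses $O(\sqrt{K})$ matrix multiplications, i.e.\ $\Otilde(m^\omega \sqrt{K})$ ring operations.

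The subtle step is handling the $h$ baby-step remainders $M(\alpha_i + q_i s, \beta_i)$, each of length $r_i < s$. A naive matrix-by-matrix product for each would cost $O(m^\omega h s) = O(m^\omega K)$ ring operations in aggregate, which is too much. The fix I would adopt is to precompute the dyadic partial-product polynomials
\[
  P_{2^k}(x) \colonequals \prod_{j = 1}^{2^k} M(x + j), \qquad k = 0, 1, \dots, \lceil \log_2 s \rceil,
\]
whose total construction cost telescopes to $\Otilde(m^\omega s)$, and to assemble each boundary remainder as a product of $O(\log s)$ shifted evaluations of these polynomials dictated by the binary expansion of $r_i$. Since $h < \sqrt{K}$, the resulting total cost across all intervals is $\Otilde(m^\omega h \log s) = \Otilde(m^\omega \sqrt{K})$, and the combined space stays within $O(m^2 \sqrt{K})$. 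The main obstacle is precisely this bookkeeping that batches the many small boundary evaluations together with the giant-step computation, so that they share the multi-point evaluation infrastructure rather than multiplying costs; this is exactly the content of Harvey's refinement \cite[Theorem 6.1]{harvey-07} of the Bostan--Gaudry--Schost method, to which I would defer for the full complexity verification.
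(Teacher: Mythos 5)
Your proposal is correct and matches the paper's treatment: the paper offers no proof of this proposition, simply quoting Harvey's Theorem 6.1 (itself a modification of \cite[Theorem 15]{bostan-gaudry-schost-07}), and your baby-step/giant-step sketch --- giant-step polynomial built by a product tree, batched multipoint evaluation at the $O(\sqrt{K})$ giant-step arguments, and dyadic partial products for the $h$ interval boundaries --- is a faithful reconstruction of that argument before deferring to the same citation. The one point worth keeping explicit is the one you already flag: the boundary evaluations of the $P_{2^k}$ must be batched per dyadic level via multipoint evaluation (not done one at a time by Horner), or the remainder phase would cost $O(m^2 K)$ rather than $\Otilde(m^2\sqrt{K})$ ring operations.
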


For the horizontal reductions, we apply Proposition~\ref{prop:bsgs} once for
each pair $(k, j) \in [0, N - 1] \times [1 + \epsilon r, (1 + \epsilon) r - 1]$
with $K = O(p N)$ and $m = O(d)$.
For the vertical reductions, we apply Proposition~\ref{prop:bsgs} once for each
$j$, again with $K = O(p N)$ and $m = O(d)$.
This adds up to $\Otilde( p^{1/2} N^{3/2} r d^\omega)$ ring operations in space $O(p^{1/2} N^{1/2} d^2 )$.

Now we bound the time for the remaining steps. We will see that the number of
ring operations for the remaining steps is independent of $p$, so that they
contribute at most a $\log p$ term to the bit complexity.

To compute $\mu_{j, \ell, b}$ we start by replacing the coefficients of $F(x)$
by their images under $\sigma$.
We then calculate all $\sigma(F)^\ell _b$ in $O(d^2 N^2)$ ring operations.
Evaluating all the binomial coefficients and finding the $D_{j, \ell}$ uses  $O(r N^2)$ ring operations.
In total, we compute all the $\mu_{j,\ell, b}$ in $O( r d^2 N^2)$ ring operations plus $O(d)$ Frobenius substitutions.

We also use the $p$-adic interpolation method introduced by Harvey \cite[\S
7.2.1]{harvey-07} and attributed to Kedlaya.  This allows us to reduce the
number of matrix products that must be computed using the linear recurrence
algorithm.  The rest can then be obtained by solving a linear system involving a
Vandermonde matrix.  In our setting, an analogous complexity analysis holds, and
the total number of ring operations required is $O(r d^3 N^3)$, where the extra
$r$ factor is due to the $j$ loop.

The matrix $M_H ^t (s)$ is sparse; for each $t$, it requires $O(d)$ ring
operations to compute.  We need to do this $O(r N)$ times, thus the total is $O(
r d N)$.

During the horizontal reduction, we do the following for each $\ell$: $O(d)$ sparse
vector-matrix multiplications, and one dense vector-matrix multiplication. This requires $O(d^2)$ ring operations per $\ell$.  Hence, lines $10$-$16$ add up to $O(r d^4 N^2)$
ring operations.  The number of vector-matrix multiplications during the
vertical reduction is $O(d N)$, thus negligible in comparison with the
horizontal phase.

Computing all the $R_{i}$ and $S_{i}$ requires $O(d^3)$ total ring operations.
Then for each $j \in [r \epsilon + 1, (1 + \epsilon) r - 1]$, the matrix $M^j _V
(t)$ can be computed in $O(d^2)$ ring operations. The total number of
ring operations for these steps is $O(r d^2 + d^3)$.

The total number of operations is $O(p^{1/2} N^{3/2} r d^\omega + r d^4 N^3)$ plus $O(d)$ Frobenius substitutions.
Converting this to bit complexity, our algorithm runs in time 
\begin{equation}
  \label{eqn:main_time}
    \Otilde( p^{1/2} N^{5/2} r d^\omega n + N^4 r d^4 n \log p + N d n^2 \log p )\text{.}
\end{equation}

In addition to the space required by Proposition~\ref{prop:bsgs}, we use $O(r d^2 N)$ space for the interpolation, to store $w_{(i,j),k}$ and to do the vector-matrix multiplications.
This adds up to $O((p^{1/2} N^{3/2} + r N^2) d^2 n \log p)$ space, and Theorem~\ref{thm:mainthm} follows.

\begin{remark}
  \label{remark:low_space}
Under certain conditions, the time-space trade-off provided by Proposition~\ref{prop:bsgs} might not be ideal or possible.
  In those cases, one can instead do the reductions one step at a time with naive vector-matrix multiplications.
  The horizontal phase amounts to $O(p r d^2 N^2)$ sparse matrix-vector multiplications of size $O(d)$ in space $O(r d^2 N)$.
  The vertical phase amounts to $O(p r d N)$ dense matrix-vector multiplications of the same size, and no extra space is required.
  With the single exception of the $O(d)$ Frobenius substitutions, all the other steps are negligible in comparison.
  In terms of bit complexity, this amounts to $\Otilde(p r d^3 N^3 n + n^2 N \log p)$ time and $O(r d^2 N n \log p)$ space, and Theorem~\ref{thm:low_space} follows.
\end{remark}

\section{Sample Computations} \label{sec:sample_computations}
We have implemented both versions of our method using \textsc{SageMath}.
However, the  $p^{1/2 + o(1)}$ version, i.e., Theorem~\ref{thm:fast} and Algorithm~\ref{algorithm}, is only implemented for the case $n = 1$, as we rely on Harvey's implementation of Proposition~\ref{prop:bsgs} in \textsc{C++}.
Our implementation is on track to be integrated in one of the upcoming versions \textsc{SageMath} \cite{sagecode}.
An example session:
\begin{verbatim}
sage: x = PolynomialRing(GF(10007),"x").gen();
sage: CyclicCover(5, x^5 + 1).frobenius_polynomial()
x^12 + 300420147*x^8 + 30084088241167203*x^4 + 1004207356863602508537649
\end{verbatim}

Our examples were computed on one core of a desktop machine with an \texttt{Intel(R) Core(TM) i5-4590 CPU @ 3.30GHz}.
In all the examples, we took
\begin{equation}
  \label{eqn:N}
  N = \max \{ \lceil \log_p (4g/i) + n i /2 \rceil : i = 1, \dots, g \},
\end{equation}
and thus by employing Newton identities we can pinpoint the numerator of
$Z(\calC, t)$; see, for example, \cite[sl. 8]{kedlaya-oxfordtalk}.  In practice,
we may even work with lower $N$, and then hopefully verify that there is only
one possible lift that satisfies the Riemann hypothesis and the functional
equation in the Weil conjectures; see \cite{kedlaya-08}.

In Table~\ref{table:3tables} we present the running times for computing
$Z(\calC, t)$ for three examples where $(g, d, r) = (6, 5, 5), (25, 6, 12),$ and
$(45, 11, 11)$, over a range of $p$ values.  This sample of running times
confirms the practicality and effectiveness of our method for a wide range of
$p$ and tuples $(d,r)$.   We are not aware of any other alternative method that can handle $p$ and $g$
in these ranges.

\begin{table}[htbp]

\begin{subtable}{\textwidth}
  \centering
\begin{tabular}{lr|lr|lr}
  $p$ & time & $p$ & time & $p$ & time \\
  \hline
  $2^{14} - 3  $     &  \texttt{   1.21s}
                     &
  $2^{22} - 3  $     &  \texttt{   21.7s}
                     &
  $2^{30} - 35 $     &  \texttt{   5m58s}
  \\
  $2^{16} - 15 $     &  \texttt{   3.05s}
                     &
  $2^{24} - 3  $     &  \texttt{   40.9s}
                     &
  $2^{32} - 5  $     &  \texttt{  11m36s}
  \\
  $2^{18} - 5  $     &  \texttt{   5.74s}
                     &
  $2^{26} - 5  $     &  \texttt{   1m23s}
                     &
  $2^{34} - 41 $     &  \texttt{  32m59s}
  \\
  $2^{20} - 3  $     &  \texttt{   10.9s}
                     &
  $2^{28} - 57 $     &  \texttt{   2m54s}
                     &
  $2^{36} - 5  $     &  \texttt{    1h7m}
  \\
\end{tabular} 
\caption{Genus 6 curve 
$\calC \colon y^5 = x^{5} - x^{4} + x^{3} - 2x^{2} + 2x + 1$ with $N = 4$}
\label{table:c55}
\end{subtable}
\begin{subtable}{\textwidth}
  \centering
\begin{tabular}{lr|lr|lr}

$p$ & time & $p$ & time & $p$ & time \\
      \hline

$2^{10} + 45$ & \texttt{4m37s}
&
$2^{18} - 5 $      &  \texttt{   12m2s}
&
$2^{26} - 5 $      &  \texttt{   2h38m}
\\

$2^{12} - 3 $      &  \texttt{   5m31s}
&
$2^{20} - 3 $      &  \texttt{  21m34s}
&
$2^{28} - 57$      &  \texttt{   5h24m}
\\

$2^{14} - 3 $      &  \texttt{   6m20s}
&
$2^{22} - 3 $      &  \texttt{  37m21s}
&
$2^{30} - 35$      &  \texttt{  12h12m}
\\

$2^{16} - 15$      &  \texttt{   8m15s}
&
$2^{24} - 3 $      &  \texttt{   1h13m}
&
$2^{32} - 5 $      &  \texttt{  23h35m}
\\
\end{tabular}
\caption{Genus 25 curve 
$\calC \colon y^6 = x^{12} + 10x^{11} + x^{10} + 2x^{9} - x^{7} - x^{5} - 4x^{4} + 31x$ with $N = 13$}
\label{table:c612}
\end{subtable}
\begin{subtable}{\textwidth}
  \centering
  \begin{tabular}{lr|lr|lr}
    $p$ & time & $p$ & time & $p$ & time \\
    \hline
    $2^{12} - 3 $      &  \texttt{   24m1s}
                       &
    $2^{18} - 5 $      &  \texttt{    1h2m}
                       &
    $2^{24} - 3 $      &  \texttt{   7h21m}
    \\
    $2^{14} - 3 $      &  \texttt{  29m50s}
                       &
    $2^{20} - 3 $      &  \texttt{   1h52m}
                       &
    $2^{26} - 5 $      &  \texttt{  16h24m}
    \\
    $2^{16} - 15$      &  \texttt{  37m14s}
                       &
    $2^{22} - 3 $      &  \texttt{   3h22m}
                       &
    $2^{28} - 57$      & \texttt{ 33h17m}
    \\
  \end{tabular}
  \caption{Genus 45, 
  $\calC \colon y^{11} = x^{11} + 21x^{9} + 22x^{8} + 12x^{7} + 5x^{4} + 15x^{3} + 6x^{2} + 99x + 11$ with $N = 23$}
  \label{table:c1111}
\end{subtable}
\caption{Running times for three curves, for various $p$. Each subsequent row
represents a (roughly) four-fold increase in $p$ and a doubling in the running
time, confirming that our implementation has a $p^{1/2 + o(1)}$ running time.}
\label{table:3tables}
\end{table}

Our implementation is also favorable when compared with Minzlaff's implementation (in \textsc{Magma 2.24-1}), which deals only with superelliptic curves, rather than arbitrary cyclic covers.
For example, consider the superelliptic curve
$$C\colon y^7 =  x^{3} + 4 \, x^{2} + 3 \, x - 1\text{,}$$
if we wish to compute all $L$ polynomials of $C$ for $p < 2^{24}$ using our implementation we estimate that this will take about 6 months on one core (on the same desktop mentioned above),
 whereas with Minzlaff's it would take around 3 years.
The curve $C$ has some interesting properties and it arose recently in some in progress work of D. Roberts, F. Rodriguez-Villegas, and J. Voight.

\bibliographystyle{alpha}
\bibliography{biblio}

\begin{thebibliography}{ACMT18}

\bibitem[ACMT18]{sagecode}
Vishal Arul, Edgar Costa, Richard Magner, and Nicholas Triantafillou.
\newblock {H}asse--{W}eil zeta function of a cyclic covers of $\mathbb{P}^1$
  over finite fields.
\newblock \url{https://trac.sagemath.org/ticket/20264}, 2018.

\bibitem[BCP97]{magma}
Wieb Bosma, John Cannon, and Catherine Playoust.
\newblock The {M}agma algebra system. {I}. {T}he user language.
\newblock {\em J. Symbolic Comput.}, 24(3-4):235--265, 1997.
\newblock Computational algebra and number theory (London, 1993).

\bibitem[BGS07]{bostan-gaudry-schost-07}
Alin Bostan, Pierrick Gaudry, and {\'E}ric Schost.
\newblock Linear recurrences with polynomial coefficients and application to
  integer factorization and {C}artier-{M}anin operator.
\newblock {\em SIAM J. Comput.}, 36(6):1777--1806, 2007.

\bibitem[CDV06]{castryck-denef-vercauteren-06}
W.~Castryck, J.~Denef, and F.~Vercauteren.
\newblock Computing zeta functions of nondegenerate curves.
\newblock {\em IMRP Int. Math. Res. Pap.}, pages Art. ID 72017, 57, 2006.

\bibitem[DV06]{denef-vercauteren-06}
Jan Denef and Frederik Vercauteren.
\newblock Counting points on {$C_{ab}$} curves using {M}onsky-{W}ashnitzer
  cohomology.
\newblock {\em Finite Fields Appl.}, 12(1):78--102, 2006.

\bibitem[GG01]{gaudry-gurel-01}
Pierrick Gaudry and Nicolas G\"urel.
\newblock An extension of {K}edlaya's point-counting algorithm to superelliptic
  curves.
\newblock In {\em Advances in cryptology---{ASIACRYPT} 2001 ({G}old {C}oast)},
  volume 2248 of {\em Lecture Notes in Comput. Sci.}, pages 480--494. Springer,
  Berlin, 2001.

\bibitem[GKS11]{gaudry-kohel-smith-11}
Pierrick Gaudry, David Kohel, and Benjamin Smith.
\newblock Counting points on genus 2 curves with real multiplication.
\newblock In {\em Advances in cryptology---{ASIACRYPT} 2011}, volume 7073 of
  {\em Lecture Notes in Comput. Sci.}, pages 504--519. Springer, Heidelberg,
  2011.

\bibitem[Gon15]{goncalves-15}
C\'ecile Gon\c{c}alves.
\newblock A point counting algorithm for cyclic covers of the projective line.
\newblock In {\em Algorithmic arithmetic, geometry, and coding theory}, volume
  637 of {\em Contemp. Math.}, pages 145--172. Amer. Math. Soc., Providence,
  RI, 2015.

\bibitem[GS04]{gaudry-schost-04}
Pierrick Gaudry and \'Eric Schost.
\newblock Construction of secure random curves of genus 2 over prime fields.
\newblock In {\em Advances in cryptology---{EUROCRYPT} 2004}, volume 3027 of
  {\em Lecture Notes in Comput. Sci.}, pages 239--256. Springer, Berlin, 2004.

\bibitem[GS12]{gaudry-schost-12}
Pierrick Gaudry and \'Eric Schost.
\newblock Genus 2 point counting over prime fields.
\newblock {\em J. Symbolic Comput.}, 47(4):368--400, 2012.

\bibitem[Har07]{harvey-07}
David Harvey.
\newblock Kedlaya's algorithm in larger characteristic.
\newblock {\em Int. Math. Res. Not. IMRN}, (22):Art. ID rnm095, 29, 2007.

\bibitem[Har12]{harrison-12}
Michael~C. Harrison.
\newblock An extension of {K}edlaya's algorithm for hyperelliptic curves.
\newblock {\em J. Symbolic Comput.}, 47(1):89--101, 2012.

\bibitem[Har14]{harvey-14}
David Harvey.
\newblock Counting points on hyperelliptic curves in average polynomial time.
\newblock {\em Ann. of Math. (2)}, 179(2):783--803, 2014.

\bibitem[Har15]{harvey-15}
David Harvey.
\newblock Computing zeta functions of arithmetic schemes.
\newblock {\em Proc. Lond. Math. Soc. (3)}, 111(6):1379--1401, 2015.

\bibitem[Hub10]{hubrechts-10}
Hendrik Hubrechts.
\newblock Fast arithmetic in unramified $p$-adic fields.
\newblock {\em Finite Fields and Their Applications}, 16(3):155 -- 162, 2010.

\bibitem[Ked01]{kedlaya-01}
Kiran~S. Kedlaya.
\newblock Counting points on hyperelliptic curves using {M}onsky-{W}ashnitzer
  cohomology.
\newblock {\em J. Ramanujan Math. Soc.}, 16(4):323--338, 2001.

\bibitem[Ked08]{kedlaya-08}
Kiran~S. Kedlaya.
\newblock Search techniques for root-unitary polynomials.
\newblock In {\em Computational arithmetic geometry}, volume 463 of {\em
  Contemp. Math.}, pages 71--81. Amer. Math. Soc., Providence, RI, 2008.

\bibitem[Ked13]{kedlaya-oxfordtalk}
Kiran~S. Kedlaya.
\newblock {Computing zeta functions of nondegenerate toric hypersurfaces via
  controlled reduction}.
\newblock \url{http://kskedlaya.org/slides/oxford2013.pdf}, 2013.
\newblock [Accessed 25-Jan-2018].

\bibitem[Min10]{minzlaff-10}
Moritz Minzlaff.
\newblock Computing zeta functions of superelliptic curves in larger
  characteristic.
\newblock {\em Math. Comput. Sci.}, 3(2):209--224, 2010.

\bibitem[Pil90]{pila-90}
J.~Pila.
\newblock Frobenius maps of abelian varieties and finding roots of unity in
  finite fields.
\newblock {\em Math. Comp.}, 55(192):745--763, 1990.

\bibitem[{Sag}]{sage}
{Sage Developers, The}.
\newblock {\em {S}ageMath, the {S}age {M}athematics {S}oftware {S}ystem}.
\newblock {\tt http://www.sagemath.org}.

\bibitem[Sch85]{schoof-85}
Ren{\'e} Schoof.
\newblock Elliptic curves over finite fields and the computation of square
  roots mod {$p$}.
\newblock {\em Math. Comp.}, 44(170):483--494, 1985.

\bibitem[Tui16]{tuitman-16}
Jan Tuitman.
\newblock Counting points on curves using a map to {$\mathbf{P}^1$}.
\newblock {\em Math. Comp.}, 85(298):961--981, 2016.

\bibitem[Tui17]{tuitman-17}
Jan Tuitman.
\newblock Counting points on curves using a map to {$\bold{P}^1$}, {II}.
\newblock {\em Finite Fields Appl.}, 45:301--322, 2017.

\bibitem[Tui18]{tuitman-18}
Jan Tuitman.
\newblock Computing zeta functions of generic projective hypersurfaces in
  larger characteristic.
\newblock {\em Math. Comp.}, 2018.

\end{thebibliography}

\end{document}